\newtheorem{theorem}{Theorem}[section]
\newtheorem{lemma}[theorem]{Lemma}
\newtheorem*{thmA}{Theorem~A}
\newtheorem*{thmB}{Theorem~B}
\newtheorem*{thmC}{Theorem C}
\newtheorem*{thmD}{Theorem D}
\theoremstyle{remark}
\DeclareMathOperator{\kernel}{ker}
\DeclareMathOperator{\Aut}{Aut}
\DeclareMathOperator{\Irr}{Irr}
\DeclareMathOperator{\IBr}{IBr}
\newcommand{\bfC}{{\mathbf C}}
\newcommand{\bfN}{{\mathbf N}}
\DeclareMathOperator{\Sym}{Sym}
\newcommand{\OO}{\mathbf{O}}
\newcommand{\Centralizer}{\mathbf{C}}
\numberwithin{equation}{section}
\newcommand{\Alt}{{\mathrm {Alt}}}
\newcommand{\Out}{{\mathrm {Out}}}
\begin{document}

\title[Brauer characters]{ Brauer characters and normal Sylow $p$-subgroups}

\author[H. P. Tong-Viet]{Hung P. Tong-Viet}
\address{Department of Mathematical Sciences, Binghamton University, Binghamton, NY 13902-6000, USA}
\email{tongviet@math.binghamton.edu}

%\thanks{}

\subjclass[2010]{Primary 20C20; Secondary 20C15, 20B15}

\date{\today}

\keywords{Brauer characters; $p$-parts of character degrees; normal Sylow $p$-subgroups}

\begin{abstract} In this paper, we study some variations of the well-known  It\^{o}-Michler theorem for $p$-Brauer characters using  various inequalities involving the $p$-Brauer character degrees of finite groups. Several new criteria for the existence of a normal Sylow $p$-subgroup of finite groups are obtained using the $p$-parts and $p'$-parts of the $p$-Brauer character degrees.

 \end{abstract}

\maketitle
%%%%%%%%%%%%%%%%%%%%%%%%%%%%%%%%%%%%%%%%%
\section{Introduction}

Throughout this paper, $G$ will be a finite group and $p$ will be a fixed prime. Let $\Irr (G)$ be the set of all complex  irreducible characters of $G$ and  let $\IBr (G)$ be the set of irreducible $p$-Brauer characters of $G$.  

In this paper, we are interested in studying the influence of $p$-Brauer character degrees on the structure of finite groups. We will relate the $p$-parts and the $p'$-parts of the Brauer character degrees to the embedding of the Sylow $p$-subgroups of the group. Several recent papers have been devoted to the study of the $p$-parts of Brauer characters and the normal structure of the groups. See, for example \cite{LNTT,NTT}.

The Brauer character degrees do not behave well  with respect to divisibility. In general, it is not even true that a Brauer character degree divides the order of the group. For example, if $G=\textrm{McL}$ and $p=2$, then $G$ has an irreducible $p$-Brauer character $\varphi\in\IBr(G)$ with $\varphi(1)=2^9\cdot 7$ while $|G|=2^7\cdot 3^6\cdot 5^3\cdot 7\cdot 11.$ So $\varphi(1)$ does not divide $|G|$, the order of $G$ and furthermore, the $2$-part of $\varphi(1)$ is larger than the $2$-part of the order of $G$. This cannot happen for $p$-solvable groups as by the Fong-Swan theorem \cite[Theorem 10.1]{Navarro}, for every $\varphi\in\IBr(G)$ there exists $\chi\in\Irr(G)$ such that $\chi^\circ=\varphi$, hence  $\varphi(1)$ always divides $|G|$ in this situation. Recall that $\chi^\circ$ is the reduction of $\chi$ to $G^\circ,$ the set of all $p$-regular elements of $G$. In general, for each $\varphi\in\IBr(G)$, there exists some $\chi\in\Irr(G)$ such that $\varphi$ is a constituent of $\chi^\circ$ and thus $\varphi(1)\leq \chi(1)$. Therefore, every Brauer character degree is bounded above by some ordinary character degree.

If $n\ge 1$ is an integer and $p$ is a prime, then we write $n_p$ for the largest power of $p$ dividing $n$.
The well-known It\^{o}-Michler theorem for Brauer character states that a prime $p$ does not divide the degree of any $p$-Brauer characters of $G$ if and only if $G$ has a normal Sylow $p$-subgroup (see \cite[Theorem 5.5]{Michler}). Notice that $\OO_p(G)$, the largest normal $p$-subgroup of $G$, is contained in the kernel of every irreducible $p$-Brauer character of $G$. Thus this theorem basically says that $\varphi(1)_p=1$ for all $\varphi\in\IBr(G)$ if and only if $|G:\kernel(\varphi)|_p=1$ for all $\varphi\in\IBr(G)$. So, we would like to know how large $\varphi(1)_p$ can be in comparison with $|G:\kernel(\varphi)|_p.$
We first consider $p$-solvable groups.
\begin{thmA}\label{th:p-solvable}
Let $p$ be a prime and let $G$ be a finite $p$-solvable group. Then $$\varphi(1)_p^2\leq |G:\kernel(\varphi)|_p$$ for all $\varphi\in\IBr(G)$ if and only if $G$ has a normal Sylow $p$-subgroup.
\end{thmA}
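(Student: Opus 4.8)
The plan is to prove the nontrivial (forward) implication by a minimal counterexample; the reverse implication is immediate. Indeed, if $G$ has a normal Sylow $p$-subgroup then the It\^o--Michler theorem for Brauer characters \cite[Theorem 5.5]{Michler} gives $\varphi(1)_p=1$ for every $\varphi\in\IBr(G)$, so $\varphi(1)_p^2=1\le|G:\kernel(\varphi)|_p$ trivially.

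For the forward direction let $G$ be a counterexample of least order. I would first record that the hypothesis passes to quotients: if $N\trianglelefteq G$ and $\varphi\in\IBr(G/N)\subseteq\IBr(G)$, then $\varphi(1)$ and $|G/N:\kernel_{G/N}(\varphi)|=|G:\kernel_G(\varphi)|$ are unchanged, so every proper quotient again satisfies the inequality and hence, by minimality, has a normal Sylow $p$-subgroup. Next I eliminate normal $p$-subgroups: if $\OO_p(G)\neq1$, pick a minimal normal subgroup $N\le\OO_p(G)$ (a $p$-group); the preimage in $G$ of the normal Sylow subgroup $\OO_p(G/N)$ is then a normal $p$-subgroup of $G$ with $p'$-quotient, forcing $\OO_p(G)$ to be Sylow and normal --- a contradiction. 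Thus $\OO_p(G)=1$, and $p$-solvability gives $M:=\OO_{p'}(G)\neq1$, so that $G/M$ has a normal Sylow $p$-subgroup. Writing $L:=\OO_{p',p}(G)$, this says $L/M$ is Sylow in $G/M$ and $G/L$ is a $p'$-group; by Schur--Zassenhaus $L=M\rtimes P$ with $P\in\Syl_p(G)$, and $[M,P]\neq1$ (otherwise $P$ would be characteristic in $L\trianglelefteq G$, hence normal). Finally, since $\OO_p(G)=1$ and $G$ is $p$-solvable, both $\Fit(G)$ and the layer $E(G)$ are $p'$-groups, so $\Fit^*(G)\le M$ and therefore $\Centralizer_G(M)\le\Fit^*(G)\le M$; in particular $\Centralizer_P(M)=1$, i.e. $P$ acts faithfully and coprimely on $M$.

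The heart of the proof is to manufacture from this faithful action a single $\varphi\in\IBr(G)$ violating the inequality. A Glauberman-correspondence count shows that a faithful coprime action is nontrivial on $\Irr(M)$, so there is $\theta\in\Irr(M)$ with $D:=I_P(\theta)<P$. Because $D$ is a $p$-group and $\barFp^\times$ is a $p'$-group, coprimality gives $H^2(D,\barFp^\times)=0$, so the $D$-invariant $\theta$ has a unique extension $\hat\theta\in\IBr(M\rtimes D)$ with $\hat\theta(1)=\theta(1)$; inducing, $\varphi_L:=\Ind_{M\rtimes D}^{L}\hat\theta\in\IBr(L)$ has $\varphi_L(1)_p=[P:D]$. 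Since $G/L$ is a $p'$-group, the orbit index and the ramification of any $\varphi\in\IBr(G)$ lying over $\varphi_L$ are prime to $p$, so $\varphi(1)_p=[P:D]$ as well. As $|G:\kernel(\varphi)|_p\le|G|_p=|P|$, the required inequality $\varphi(1)_p^2\le|G:\kernel(\varphi)|_p$ fails the moment $[P:D]^2>|P|$, that is, the moment $P$ has an orbit on $\Irr(M)$ of length exceeding $\sqrt{|P|}$.

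The main obstacle is thus a pure statement about the faithful coprime permutation action of $P$ on $\Irr(M)$: to exhibit an orbit of length $>\sqrt{|P|}$. For abelian $P$ this is clean --- decomposing $M$ into one-dimensional weight spaces, faithfulness forces the weights to span the dual group, and a character nontrivial on a spanning set of weights has a regular orbit, giving $[P:D]=|P|>\sqrt{|P|}$. For general $P$ one must invoke, and where necessary sharpen, regular-orbit theorems for coprime actions, dispose of the finitely many small configurations admitting no such orbit (where a violating character must instead be found by making $\kernel(\varphi)$ carry a $p$-part so that $|G:\kernel(\varphi)|_p<|P|$), and confirm throughout that the chosen $\varphi$ has $|G:\kernel(\varphi)|_p$ as computed. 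This orbit analysis, rather than the group-theoretic reductions above, is where the real difficulty lies.
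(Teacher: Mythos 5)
Your group-theoretic reductions are sound (the quotient argument, $\OO_p(G)=1$, and the Hall--Higman-style conclusion that $\bfC_G(M)\le M$ for $M=\OO_{p'}(G)$, giving a faithful coprime action of $P$ on $M$), and your Clifford-theoretic bookkeeping --- extending the $D$-invariant $\theta$ via $H^2(D,\barFp^\times)=0$, inducing to get $\varphi_L(1)_p=[P:D]$, and using the $p'$-index of $L$ in $G$ --- is correct and parallels the paper's induction-from-the-stabilizer step. But the heart of the theorem, which you explicitly defer, is a genuine gap: you need $P$ to have an orbit of length exceeding $\sqrt{|P|}$ on $\Irr(M)$, and your plan for the general case (``invoke, and where necessary sharpen, regular-orbit theorems'' and ``dispose of the finitely many small configurations admitting no such orbit'') is not viable as stated. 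Faithful coprime actions of $p$-groups without regular orbits are not confined to finitely many configurations, and no off-the-shelf regular-orbit theorem covers $\Irr(M)$ for an arbitrary $p'$-group $M$. Worse, for nonabelian $M$ the actions of $P$ on $M$ and on $\Irr(M)$ need not be permutation isomorphic, so the one exception-free tool of exactly the needed strength --- Isaacs' large-orbit theorem \cite{Isaacs}, which gives $x\in M$ with $|\bfC_P(x)|\le (|P|/p)^{1/p}$ and hence an orbit of length $>\sqrt{|P|}$ --- cannot be applied to $\Irr(M)$ directly. Your choice to work with all of $\OO_{p'}(G)$ makes the orbit problem strictly harder than it needs to be.

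The paper closes precisely this gap by a further reduction you skipped: in the minimal counterexample it shows $G$ has a \emph{unique} minimal normal subgroup $N$ (a $p'$-group, by $p$-solvability and $\OO_p(G)=1$) on which $P$ acts faithfully and coprimely, and then divides into two cases where the orbit statement is actually provable. If $N$ is elementary abelian, the actions of $P$ on $N$ and on $\Irr(N)$ \emph{are} permutation isomorphic, so Isaacs' theorem transfers to $\Irr(N)$ and yields $\lambda$ with $|P:\bfC_P(\lambda)|>\sqrt{|P|}$; inducing from the stabilizer $N\bfC_P(\lambda)$ in $PN$ (which is normal in $G$) produces a faithful $\varphi\in\IBr(G)$ with $\varphi(1)_p^2>|P|=|G:\kernel(\varphi)|_p$. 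If $N$ is nonabelian --- a direct product of nonabelian simple $p'$-groups --- the paper instead invokes Qian's theorem \cite{Qian} that $P$ has a regular orbit on $\Irr(N)$, giving a faithful $\beta\in\IBr(G)$ with $\beta(1)_p=|G|_p$ and the same contradiction. So your proposal is not a complete proof: it correctly isolates the key orbit statement but neither proves it nor identifies the case division (abelian versus semisimple minimal normal subgroup) and the two specific results (Isaacs' large-orbit theorem and Qian's regular-orbit theorem) that make it true; your fallback of ``making $\kernel(\varphi)$ carry a $p$-part'' in bad configurations is likewise unsubstantiated.
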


Thus if $G$ is $p$-solvable and is not $p$-closed, i.e., $G$ has no normal Sylow $p$-subgroup, then $ |G:\kernel(\varphi)|_p<\varphi(1)_p^2$ for some $\varphi\in\IBr(G)$.

As suspected,  Theorem A does not hold for arbitrary finite groups. For $p=2$, we can take $G=\textrm{M}_{22}$. We see that $\varphi(1)_p^7\leq |G:\kernel(\varphi)|_p=|G|_p$ for all $\varphi\in\IBr(G)$.
 For $p=3$, we can take $G=\Alt(7)$, the alternating group of degree $7$; we have $\varphi(1)_p^2\leq |G:\kernel(\varphi)|_p=|G|_p$ for all $\varphi\in\IBr(G)$. In both cases, $G$ has no normal Sylow $p$-subgroup.

Define \[\mu=\mu_p=\begin{cases}
      2,& \text{ if $p\ge 5$}\\
      3,& \text{ if $p=3$},\\
      9, &\text{ if $p=2$}.
\end{cases}\]
For general groups, we obtain  the following.
\begin{thmB} Let $p$ be a prime and let $G$ be a finite group. Then $$\varphi(1)^{\mu_p}_p\leq |G:\kernel(\varphi)|_p$$ for all $\varphi\in\IBr(G)$ if and only if $G$ has a normal Sylow $p$-subgroup.
\end{thmB}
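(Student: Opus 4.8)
The plan is to prove the two directions separately, with essentially all of the work in the ``only if'' direction. For ``if'', suppose $G$ has a normal Sylow $p$-subgroup. By the It\^{o}--Michler theorem for Brauer characters quoted above, $\varphi(1)_p=1$ for every $\varphi\in\IBr(G)$, so $\varphi(1)_p^{\mu_p}=1\leq |G:\kernel(\varphi)|_p$ trivially; this holds for any exponent in place of $\mu_p$ and uses nothing beyond the cited theorem.

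For ``only if'' I would argue by contradiction, taking $G$ of minimal order with $\varphi(1)_p^{\mu_p}\leq |G:\kernel(\varphi)|_p$ for all $\varphi\in\IBr(G)$ but $G$ not $p$-closed. Inflation identifies $\IBr(G/N)$ with the Brauer characters having $N$ in their kernel, and for these $|G:\kernel(\varphi)|_p=|(G/N):\kernel(\varphi)|_p$; hence every proper quotient $G/N$ (with $1\neq N\trianglelefteq G$) again satisfies the hypothesis and so, by minimality, is $p$-closed. From this one extracts in turn: $\OO_p(G)=1$ (else $G/\OO_p(G)$ being $p$-closed forces $G$ $p$-closed); $G$ has a unique minimal normal subgroup $N$ (two of them, $N_1,N_2$, would embed $G\hookrightarrow G/N_1\times G/N_2$, a product of $p$-closed groups, hence $p$-closed, and every subgroup of a $p$-closed group is $p$-closed); and $G$ is not $p$-solvable, since $\mu_p\geq 2$ gives $\varphi(1)_p^2\leq\varphi(1)_p^{\mu_p}\leq|G:\kernel(\varphi)|_p$, whence Theorem~A would make $G$ $p$-closed. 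Combining $\OO_p(G)=1$ with non-$p$-solvability forces $N=S_1\times\cdots\times S_k$ to be a direct power of a non-abelian simple group $S$ with $p\mid|S|$ (an abelian or $p'$ minimal normal subgroup would, together with $G/N$ being $p$-closed, make $G$ $p$-solvable), and $C_G(N)=1$, so that $\mathrm{Inn}(N)\cong S^k\trianglelefteq G\leq \Aut(S)\wr \Sym_k$.

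The heart of the matter is to produce from this almost-simple-type structure a single $\varphi\in\IBr(G)$ violating the inequality. Since $C_G(N)=1$ and $N$ is the unique minimal normal subgroup, any $\varphi$ nontrivial on $N$ is faithful, so it suffices to exhibit a faithful $\varphi$ with $\varphi(1)_p^{\mu_p}>|G|_p$. The mechanism is to pick $\theta\in\IBr(S)$ whose degree carries a large $p$-part, to form the $G$-invariant product $\theta\times\cdots\times\theta\in\IBr(N)$, and to pass to a constituent $\varphi\in\IBr(G)$ lying over it; then $\theta(1)^k$ divides $\varphi(1)$, so $\varphi(1)_p\geq\theta(1)_p^{\,k}$, while $|G|_p=|S|_p^{\,k}\,|G/N|_p$ with $|G/N|_p\leq |\Out(S)|_p^{\,k}\,|\Sym_k|_p$ tightly controlled. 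Thus it reduces to knowing, for each simple $S$ with $p\mid|S|$, that $\theta(1)_p^{\mu_p}>|S|_p\,|\Out(S)|_p$ for a suitable $\Aut(S)$-invariant $\theta$ that extends (so inertia and induction do not spoil the estimate, and the symmetric-group factor is absorbed by a little slack). In defining characteristic this is immediate from the Steinberg character, which lies in $\IBr(S)$, has degree exactly $|S|_p$, and extends to $\Aut(S)$; here even $\mu_p=2$ is comfortable.

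The main obstacle, and the origin of the precise values $\mu_p\in\{2,3,9\}$, is the case-by-case verification over the remaining simple groups via the classification -- alternating groups, sporadic groups, and groups of Lie type in non-defining characteristic -- where one must locate a Brauer character whose $p$-part is large relative to $|S|_p$ and which extends well. The worst cases are exactly those (such as the $\textrm{M}_{22}$, $p=2$ and $\Alt(7)$, $p=3$ examples flagged in the introduction, where the largest $p$-part of a Brauer degree is small although $|S|_p$ is not) that dictate taking $\mu_2=9$ and $\mu_3=3$; pinning these down requires known decomposition-matrix and degree data for small simple groups together with uniform lower bounds on Steinberg-type degrees in cross characteristic. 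I would package the required existence of $\theta$ as a single lemma, prove the generic Lie-type and alternating estimates uniformly, dispose of the finitely many small and sporadic exceptions by direct inspection, and then feed the lemma back through the Clifford-theoretic step to reach the contradiction that completes the proof.
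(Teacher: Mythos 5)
Your overall architecture reproduces the paper's proof almost step for step: the ``if'' direction via It\^{o}--Michler; a minimal counterexample with $\OO_p(G)=1$; all proper quotients $p$-closed, hence (via Theorem~A and $\mu_p\ge 2$) $G$ not $p$-solvable; a unique minimal normal subgroup $N\cong S^k$ with $\Centralizer_G(N)=1$ and $G\le \Aut(S)\wr\Sym(k)$; Clifford theory applied to a constituent $\varphi$ over $\theta^k$ giving $\varphi(1)_p\ge\theta(1)_p^k$; and the numeric comparison $\theta(1)_p^{\mu_p}$ versus $|\Aut(S)|_p$ (the symmetric-group factor $|\Sym(k)|_p<p^{k/(p-1)}$ being absorbed exactly as you say, since the resulting exponent inequality is between integers). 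You even identify the correct extremal configurations ($\textrm{M}_{22}$ at $p=2$, $\Alt(7)$ at $p=3$) that force $\mu_2=9$ and $\mu_3=3$. Two small points: your uniqueness argument for $N$ (embedding $G$ into $G/N_1\times G/N_2$ and using that subgroups and products of $p$-closed groups are $p$-closed) is a clean alternative to the paper's Dedekind-law computation; and your insistence on an $\Aut(S)$-invariant, extendible $\theta$ is unnecessary --- for any constituent $\varphi$ of a Brauer character lying over $\phi=\theta^k$ one has $\phi(1)\mid\varphi(1)$, so $\varphi(1)_p\ge\theta(1)_p^k$ with no invariance hypothesis (the paper's appeal to Swan's theorem is likewise only cosmetic here).

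The genuine gap is in the one piece you leave as a plan: the existence, for each simple $S$ with $p\mid |S|$, of $\theta\in\IBr(S)$ with $\theta(1)_p^{\mu_p}>|\Aut(S)|_p$ (the paper's Theorem~\ref{th:pParts}). Your proposed mechanism for the non-defining-characteristic, alternating, and sporadic cases --- ``uniform lower bounds on Steinberg-type degrees in cross characteristic'' together with decomposition-matrix data --- does not deliver what is needed, because a lower bound on a Brauer character's \emph{degree} says nothing about the $p$-\emph{part} of that degree; the Steinberg character helps only in defining characteristic, where its degree is exactly $|S|_p$. The workable mechanism, and the one the paper uses, is block-theoretic: if $B$ is a $p$-block of defect $d$ and $|S|_p=p^a$, then $p^{a-d}$ divides every $\beta(1)$ with $\beta\in\IBr(B)$, so large $p$-parts come from blocks of small defect. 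Concretely, the paper invokes Granville--Ono's classification (Lemma~\ref{lem:defect zero}): every simple group has a $p$-block of defect zero except for $p\in\{2,3\}$ and an explicit list of alternating and sporadic groups; in the defect-zero case $\beta(1)_p=|S|_p$ and Gagola's bound $|\Out(S)|_p<|S|_p$ already gives $\beta(1)_p^2>|\Aut(S)|_p$, which is why $\mu_p=2$ suffices uniformly for $p\ge 5$ --- a point your sketch cannot reach. The exceptional alternating groups are then handled by the small-defect bounds $d\le (a-1)/2$ or $(a-2)/2$ of \cite{CCLT} (with $\Alt(22),\Alt(24),\Alt(26)$ at $p=2$ requiring the explicit degrees of the modules $D^{(n-3,3)}$), and the finitely many sporadic exceptions by computer check. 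Without the defect-zero/small-defect input, your lemma remains unproved and the ``only if'' direction does not close.
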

This generalizes several  results obtained in \cite{CCLT,Gagola,GL,Qian} for both ordinary and Brauer characters.

\medskip
%If $G$ has a $p$-Brauer character $\varphi\in\IBr(G)$ such that $\varphi(1)$ divides $|G:\kernel(\varphi)|$ and $|G:\kernel(\varphi)|/\varphi(1)=r^m$ for some prime $r$ and integer $m\ge 1,$ then $|G:\kernel(\varphi)|=r^m\varphi(1)$ and so $|G/\kernel(\varphi)|_r=r^m\varphi(1)_r$ hence  $\varphi(1)_{r'}=|G:\kernel(\varphi)|_{r'}$. In particular, $\varphi(1)_{r'}\ge |G:\kernel(\varphi)|_{r'}$.

In \cite{CH}, it was shown that if $G$ has an ordinary character $\chi\in\Irr(G)$ such that $|G|/\chi(1)$ is a prime power, then $G$ cannot be a nonabelian simple group.  The number $|G|/\chi(1)$ is called the co-degree of $\chi$. Although other authors called $|G:\kernel(\chi)|/\chi(1)$ the co-degree of $\chi$. The co-degrees of ordinary characters have been studied by several authors in \cite{CH,CMM,Qian2}.
Except for $p$-solvable groups, it makes no sense to define the `co-degree' for $p$-Brauer characters. Here is what we could do to extend Chillag and Herzog's result to Brauer characters. The idea is to replace divisibility by inequality. Let $\chi\in\Irr(G)$ be an irreducible ordinary character of $G$. Suppose that $|G:\kernel(\chi)|/\chi(1)=r^m$ for some prime $r$ and integer $m\ge 1.$ Then $|G:\kernel(\chi)|=\chi(1)r^m$ and thus $|G:\kernel(\chi)|_{r'}=\chi(1)_{r'}$. In particular, we have $|G:\kernel{\chi}|_{r'}\leq \chi(1)_{r'}.$ 

Now using this inequality for Brauer characters, we can  extend Theorem 1 in \cite{CH}  to Brauer characters  as follows.

%We next consider the $r'$-parts of the $p$-Brauer character degrees where $r$ is some prime. We first extend a result due to Chillag and Herzog in \cite[Theorem 1]{CH}.
\begin{thmC}
Let $p$ be a prime and let $G$ be a finite group. If $$\varphi(1)_{r'}\ge |G:\kernel(\varphi)|_{r'}$$ for some prime $r$ and some $p$-Brauer character $\varphi\in\IBr(G)$, then $G/\kernel(\varphi)$ is not a nonabelian simple group.
\end{thmC}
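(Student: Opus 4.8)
The plan is to argue by contradiction. Suppose $G/\kernel(\varphi)$ is a nonabelian simple group $S$. Setting $K=\kernel(\varphi)$, the Brauer character $\varphi$ descends to a faithful irreducible $p$-Brauer character of $S=G/K$ of the same degree, and since $|G:K|=|S|$ the hypothesis reads $\varphi(1)_{r'}\ge|S|_{r'}$. Because $S$ is simple and $\varphi$ is faithful, $\varphi$ is nontrivial (the trivial character has kernel $S$, which would force $G/K=1$ rather than a simple group). I will show this situation is arithmetically impossible.

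The decisive ingredient is an upper bound for $\varphi(1)$. As recalled in the introduction, there is $\chi\in\Irr(S)$ with $\varphi$ a constituent of $\chi^\circ$, so $\varphi(1)\le\chi(1)$. Using the degree identity $\sum_{\psi\in\Irr(S)}\psi(1)^2=|S|$ and noting that the trivial character already contributes $1$ to the sum, every $\chi\in\Irr(S)$ satisfies $\chi(1)^2\le|S|-1<|S|$. Hence
$$\varphi(1)\le\chi(1)<\sqrt{|S|}.$$

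Feeding in the hypothesis gives $|S|_{r'}\le\varphi(1)_{r'}\le\varphi(1)<\sqrt{|S|}$, so $|S|_{r'}^{2}<|S|=|S|_{r}\,|S|_{r'}$ and therefore $|S|_{r'}<|S|_{r}$; that is, a Sylow $r$-subgroup of $S$ would have order exceeding $\sqrt{|S|}$. (If $r\nmid|S|$ the same chain yields the absurdity $|S|=|S|_{r'}<\sqrt{|S|}$, so in fact $r$ divides $|S|$, and the genuine case is the one just described.)

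The main obstacle, and the real content of the theorem, is to exclude this last possibility: I would invoke the fact that for every nonabelian simple group $S$ and every prime $r$ one has $|S|_{r}\le\sqrt{|S|}$, equivalently $|S|_{r}\le|S|_{r'}$, which directly contradicts $|S|_{r'}<|S|_{r}$. I expect the proof of this uniform order estimate to rest on the classification of finite simple groups, verifying it from the order formulas for the alternating groups, the groups of Lie type, and the sporadic groups; the naive permutation bound $|S|\le(|S:M|)!$ obtained from the action of $S$ on the cosets of a maximal subgroup $M$ containing a Sylow $r$-subgroup is far too weak to yield $|S|_{r}\le\sqrt{|S|}$ on its own. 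Once this estimate is available, no further case analysis is required and the theorem follows at once.
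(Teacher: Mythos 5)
Your reduction is exactly the paper's: pass to $\kernel(\varphi)=1$, choose $\chi\in\Irr(S)$ with $\varphi$ a constituent of $\chi^\circ$, obtain $|S|_{r'}\le\varphi(1)_{r'}\le\varphi(1)\le\chi(1)<\sqrt{|S|}$, deduce $|S|_{r'}<|S|_r$, and contradict a uniform estimate on Sylow orders in simple groups. That estimate is precisely Lemma~\ref{lem:Sylow order} of the paper, stated there in the strict form $|S|>|S|_r^2$ (marginally stronger than the $|S|_r\le\sqrt{|S|}$ you invoke, and either version closes the argument); your parenthetical reduction to $r\mid|S|$ matches the lemma's opening step as well. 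The only point of divergence is how that lemma gets proved: you leave it as an assertion, anticipating a case-by-case verification from the CFSG order formulas for alternating, Lie-type, and sporadic groups. That route is feasible but heavy, and the paper does something slicker that is worth knowing: by the Granville--Ono theorem (Lemma~\ref{lem:defect zero}), every nonabelian simple group has an $r$-block of defect zero except for a short explicit list occurring only for $r\in\{2,3\}$ (a handful of sporadic groups and the alternating groups). A defect-zero block supplies $\chi\in\Irr(S)$ with $\chi(1)_r=|S|_r$, and then the very sum-of-squares bound you already used gives $|S|_r^2\le\chi(1)^2<|S|$ with no case analysis; the exceptional list is dispatched by computer checks for the sporadics and, for $\Alt(n)$, by the elementary estimates $|\Alt(n)|_3<3^{n/2}$ and $|\Alt(n)|_2<2^{n-1}$ combined with the inductive inequalities $n!>2\cdot 3^{n}$ (for $n\ge 7$) and $n!>2^{2n-1}$ (for $n\ge 8$). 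So your skeleton is sound and complete modulo the lemma, and what the paper's treatment buys is a proof of that lemma which recycles the same character-theoretic inequality $\chi(1)^2<|S|$ driving the main argument, rather than a fresh trawl through order formulas.
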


If $G$ has a faithful character $\beta\in\IBr(G)$ and $\beta(1)_{r'}\ge |G|_{r'}$ for some prime $r$, then $G$ is not nonabelian simple by Theorem C. This gives a nice non-simplicity condition for finite group. Obviously, if $|G|$ is indivisible by $p$, then $\beta$ can be considered as an ordinary character of $G$ and thus Theorem C is a genuine generalization of Theorem 1 in \cite{CH}.

It follows from Theorem 2A in \cite{CH} that if $|G|/\chi(1)$ is a prime power for all $\chi\in\Irr(G)$ with $\chi(1)>1$, then $G$ has a normal Sylow $p$-subgroup for some prime $p$ and $G/P$ is abelian. Notice that the hypothesis implies that $G/\chi(1)$ must be a power of a fixed prime which is $p$ in this case.

We now suppose that for every $\varphi\in\IBr(G)$ with $\varphi(1)>1$, $\varphi(1)_{r'}\ge |G|_{r'}$ for some prime $r$. It is easy to see that there is a unique prime, say $s$, such that $\varphi(1)_{s'}\ge |G|_{s'}$ for all $\varphi\in\IBr(G)$ with $\varphi(1)>1$. Notice that if $p\neq s$, then  it is not true that  $G$ will have a normal Sylow $s$-subgroup. For example, take $G=\Sym(4)$, $p=3$ and $s=2.$ Then every nonlinear irreducible $3$-Brauer character $\varphi$ of $G$ is faithful and has degree $3$; also $|G|_{2'}=\varphi(1)_{2'}$ but $G$ has no normal Sylow $2$-subgroup.
\begin{thmD}
Let $p$ be a prime and let $G$ be a finite group. Then $$\varphi(1)_{p'}\ge |G:\kernel(\varphi)|_{p'}$$ for all $\varphi\in\IBr(G)$ with $\varphi(1)>1$ if and only if $G$ has a normal Sylow $p$-subgroup $P$ and $G/P$ is abelian.
\end{thmD}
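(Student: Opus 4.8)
The plan is to prove the reverse implication by checking that the hypothesis is vacuous, and the forward implication by showing that the hypothesis forces every $\varphi\in\IBr(G)$ to be linear, after which the stated structure drops out from a standard property of $\OO_p(G)$.

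For the reverse direction, suppose $G$ has a normal Sylow $p$-subgroup $P$ with $G/P$ abelian. Then $P=\OO_p(G)\le\kernel(\varphi)$ for every $\varphi\in\IBr(G)$, so each such $\varphi$ is inflated from $G/P$; since $G/P$ is abelian of order prime to $p$, we have $\IBr(G/P)=\Irr(G/P)$, every member of which is linear. Hence there is no $\varphi\in\IBr(G)$ with $\varphi(1)>1$, and the inequality holds vacuously.

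For the forward direction, the key input is the elementary bound $\sum_{\varphi\in\IBr(G)}\varphi(1)\le |G|_{p'}$. This holds because the regular module decomposes as $\barFp G=\bigoplus_{\varphi}\Phi_\varphi^{\,\varphi(1)}$, where $\Phi_\varphi$ is the projective cover of the simple module afforded by $\varphi$, and each $\dim\Phi_\varphi$ is divisible by $|G|_p$ (projectives are free over $\barFp P$); taking dimensions gives $|G|\ge |G|_p\sum_\varphi\varphi(1)$. Now suppose, for contradiction, that some $\varphi\in\IBr(G)$ has $\varphi(1)>1$, and pass to $\bar G:=G/\kernel(\varphi)$, on which $\varphi$ is faithful, nonlinear, and irreducible. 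Applying the bound to $\bar G$ and isolating the trivial character and $\varphi$ among the summands yields $1+\varphi(1)\le\sum_{\psi\in\IBr(\bar G)}\psi(1)\le |\bar G|_{p'}=|G:\kernel(\varphi)|_{p'}$. On the other hand the hypothesis gives $|G:\kernel(\varphi)|_{p'}\le\varphi(1)_{p'}\le\varphi(1)$. Combining the two yields $1+\varphi(1)\le\varphi(1)$, which is absurd. Therefore every $\varphi\in\IBr(G)$ is linear. To convert this into the asserted structure, note that each linear $\varphi\in\IBr(G)$ is the restriction to $p$-regular elements of a homomorphism $G\to\barFp^\times$ with $p'$-image, so $\kernel(\varphi)$ contains both $G'$ and every $p$-element, hence contains $M:=G'\,\OO^{p'}(G)$. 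Thus $M$ acts trivially on every simple $\barFp G$-module, which forces $M\le\OO_p(G)$ by the identification of $\OO_p(G)$ with the largest normal subgroup acting trivially on all simple modules (equivalently $\OO_p(G)=\{g:g-1\in J(\barFp G)\}$). Consequently $G/\OO_p(G)$ is a quotient of the abelian $p'$-group $G/M$, so $\OO_p(G)$ is a normal Sylow $p$-subgroup with abelian quotient, as required.

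The step I expect to look hardest is the forward direction, and a first instinct is to pass to a minimal counterexample, reduce to an almost simple section, and appeal to the classification together with lower bounds on the smallest nontrivial $p$-Brauer character degree and upper bounds on Sylow $p$-subgroup orders of simple groups. The point of the argument above is that this can be bypassed entirely: summing the degrees of all irreducible Brauer characters of $G/\kernel(\varphi)$ collides directly with the hypothesis, so no classification-dependent analysis is needed. The only care required is that the degree-sum bound be applied on the faithful quotient $G/\kernel(\varphi)$ rather than on $G$ itself, so that $|G:\kernel(\varphi)|_{p'}$ (and not $|G|_{p'}$) is what appears on the right.
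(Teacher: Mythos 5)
Your proposal is correct, and it takes a genuinely different and substantially more elementary route than the paper. The paper proves the hard direction by a minimal-counterexample analysis: it reduces to $\OO_p(G)=1$ with a unique minimal normal subgroup $N$, treats the case where $N$ is a $p'$-group via Isaacs' large-orbit theorem and Qian's regular-orbit lemma, and treats $N\cong S^k$ via its Theorem~\ref{th:pParts}, which rests on the classification (Granville--Ono's defect-zero results, \cite{CCLT}, and GAP computations for the sporadic groups and for $\Alt(n)$ with $n\in\{7,9,11,13,22,24,26\}$). You bypass all of this with the standard decomposition $\barFp G\cong\bigoplus_{\varphi}\Phi_\varphi^{\varphi(1)}$ and the divisibility $|G|_p\mid\dim\Phi_\varphi$, which give $\sum_{\varphi\in\IBr(G)}\varphi(1)\le|G|_{p'}$; applying this on the faithful quotient $G/\kernel(\varphi)$ and isolating the trivial character yields $1+\varphi(1)\le|G:\kernel(\varphi)|_{p'}\le\varphi(1)_{p'}\le\varphi(1)$, an immediate contradiction, and your use of $\OO_p(G)=\bigcap_{\psi\in\IBr(G)}\kernel(\psi)$ to convert ``all Brauer characters linear'' into ``$P=\OO_p(G)\unlhd G$ with $G/P$ abelian'' is exactly right. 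Your argument is classification-free and self-contained, and it in fact proves something stronger than the paper states: \emph{no} nonlinear $\varphi\in\IBr(G)$ can satisfy $\varphi(1)_{p'}\ge|G:\kernel(\varphi)|_{p'}$ at all (so the hypothesis of Theorem~D is only ever satisfied vacuously, and the $r=p$ case of Theorem~C also drops out for free, without Lemma~\ref{lem:Sylow order}). What the paper's heavier route buys is mainly economy within the paper itself --- Theorem~\ref{th:pParts} and the orbit lemmas are shared infrastructure already needed for Theorems~A and~B --- but as a proof of Theorem~D in isolation, yours is strictly simpler.
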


The paper is organized as follows. In Section~\ref{sec:p-Part}, we will prove Theorems A and B. Theorems C and D will be proved in Section~\ref{sec:pprime-Part}.

%---------------------------------------------

\section{$p$-Parts of Brauer character degrees}\label{sec:p-Part}
We first collect some results which we will need for the proofs of our main theorems.
\begin{lemma}\label{lem:large orbit} Let $P$ be a nontrivial $p$-group that acts faithfully and coprimely on a group $H$. Then there exists an element $x\in H$ such that $|\bfC_P(x)|\leq (|P|/p)^{1/p}.$
\end{lemma}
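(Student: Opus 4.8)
The plan is to reduce to the case where $H$ is an elementary abelian group, that is, a faithful module $V$ for $P$ over a field $\bbF_q$ with $q$ a power of a prime different from $p$, and then to produce a vector with small stabilizer by a counting argument. Writing $|P|=p^n$, the asserted bound reads $|\bfC_P(x)|\le p^{(n-1)/p}$, equivalently that some $P$-orbit on $H$ has length at least $|P|^{1-1/p}p^{1/p}$; since $|\bfC_P(x)|$ is always a power of $p$, for $n\le p$ this is just the existence of a regular orbit. For the reduction I would induct on $|H|$: because $\bfC_P(x)$ depends only on $x$ and not on the ambient group, it suffices to locate a good $x$ inside a minimal $P$-invariant subgroup $V\le H$, which by coprimeness is semisimple, and in the elementary abelian case (the one the applications require) is a faithful-or-not $\bbF_q P$-module. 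If $P$ acts faithfully on such a $V$ we are reduced to the module case; the delicate point, discussed below, is the case where the kernel $K=\bfC_P(V)$ is nontrivial, since the target bound does not simply factor through $|K|$ and $|P/K|$.

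For the module case I take $V$ faithful and, after passing to an irreducible $\bbF_q P$-summand and invoking Clifford theory, irreducible. The key structural input is then that $\Center(P)$ acts on $V$ by a faithful linear character, hence by nonidentity scalars; consequently no nonzero $v\in V$ is fixed by any nontrivial element of $\Center(P)$, so $\bfC_P(v)\cap\Center(P)=1$ and $\bfC_P(v)$ embeds into $P/\Center(P)$ for every $0\ne v$. I would then study the bad set $B=\{v\in V:\ |\bfC_P(v)|>p^{(n-1)/p}\}$ and show $B\ne V$. Every $v\in B$ is fixed by some subgroup $R\le\bfC_P(v)$ of order $p^{s}$, where $p^s$ is the least power of $p$ exceeding $p^{(n-1)/p}$, and any such $R$ meets $\Center(P)$ trivially. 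Thus $B\subseteq\bigcup_R \bfC_V(R)$, the union taken over subgroups $R$ of order $p^s$ with $R\cap\Center(P)=1$, and a union bound gives $|B|\le\sum_R|\bfC_V(R)|$; it remains to show this is strictly smaller than $|V|=q^{\dim V}$.

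The heart of the matter is an upper bound for the fixed-space dimension $\dim\bfC_V(R)$, equivalently a lower bound for $\dim[V,R]$, in terms of $|R|$. In favorable situations—for instance when a normal overgroup of $R$ is available and Clifford theory spreads the $R$-eigencharacters evenly across $V$—one expects roughly $\dim\bfC_V(R)\le(\dim V)/|R|$, and feeding $|R|=p^s$ with $s\approx(n-1)/p$ into the union bound, weighed against the number of subgroups of order $p^s$, is what produces exactly the exponent $1/p$. I expect the main obstacle to be that this even-distribution estimate can fail for non-normal $R$ and, more seriously, in the small exceptional configurations (small $q$, notably $q=2$, together with specific non-abelian $p$-groups $P$) where no regular orbit exists and the bound is essentially sharp; these must be isolated and treated directly, and it is precisely their presence that forces the exponent to be $1/p$ rather than something stronger. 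A secondary difficulty, flagged above, is propagating the estimate through the reduction when $P$ is faithful on no single chief factor: the contribution of the kernel $K=\bfC_P(V)$ must then be combined with the action on $V$ without degrading the $(|P|/p)^{1/p}$ bound.
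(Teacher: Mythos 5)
Your proposal is a program, not a proof, and the two places you yourself flag as ``delicate'' are precisely where it collapses. First, the reduction step: locating $x$ inside a minimal $P$-invariant subgroup $V\le H$ does not suffice, because the kernel $K=\bfC_P(V)$ is contained in $\bfC_P(x)$ for \emph{every} $x\in V$, so when $K\neq 1$ no element of $V$ can work and one is forced to combine elements from several invariant subgroups. For abelian $H$ one at least has $\bfC_P(x_1x_2)=\bfC_P(x_1)\cap\bfC_P(x_2)$, but upper bounds on $|\bfC_P(x_i)|$ give no upper bound on the intersection, so the bound $(|P|/p)^{1/p}$ does not propagate; and for nonabelian $H$ (the lemma is stated for arbitrary $H$, even though the paper only applies it to $H=\Irr(N)$ with $N$ elementary abelian) even the intersection identity fails. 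You name this obstacle but offer no mechanism to overcome it, and it is the heart of the induction. Second, the core quantitative input $\dim\bfC_V(R)\le(\dim V)/|R|$ is only asserted ``in favorable situations''; it is false in general for non-normal $R$, and your union bound additionally requires a count of the subgroups $R$ of order $p^s$ with $R\cap\Center(P)=1$, which you never supply --- that count can be of size roughly $p^{s(n-s)}$, which swamps the savings when $\dim V$ is small relative to $n$. The ``exceptional configurations'' you promise to ``isolate and treat directly'' are likewise never identified. What you do get right is the bookkeeping (the restatement as an orbit of length $\ge |P|^{1-1/p}p^{1/p}$, the regular-orbit case $n\le p$, and the fact that $\Center(P)$ acts fixed-point-freely on a faithful irreducible module, since $\Center(P)$ then embeds in the field $\End_{\bbF_qP}(V)$), but none of this touches the hard steps.

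For comparison: the paper does not prove this lemma at all --- it is quoted verbatim as Theorem~A of Isaacs, \emph{Large orbits in actions of nilpotent groups}, Proc.\ Amer.\ Math.\ Soc.\ \textbf{127} (1999), 45--50, whose argument is a genuine piece of work (in particular it handles exactly the gluing-across-kernels problem that your sketch leaves open, via a stronger inductive statement) and is not reconstructed by your outline. If your goal were only to justify the paper's actual use of the lemma, you could legitimately restrict to $H$ elementary abelian, but even there the two gaps above remain; as it stands the proposal should be graded as having genuine, unrepaired gaps rather than as an alternative proof.
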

\begin{proof} This is Theorem A in \cite{Isaacs}.
\end{proof}
\begin{lemma}\label{lem:coprime}
Let $P$ be a $p$-group and suppose that $P$ acts faithfully and coprimely on $N,$ where $N$ is a direct product of some nonabelian simple groups. Then $P$ has at least two regular orbits on $\Irr(N)$.
\end{lemma}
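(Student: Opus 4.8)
The plan is to count the irreducible characters of $N$ that lie in regular $P$-orbits and to show that there are at least $2|P|$ of them. Coprimeness forces $p\nmid|N|$, so $P$ embeds in $\Aut(N)$ and permutes the simple direct factors of $N$. A character $\theta\in\Irr(N)$ lies in a non-regular orbit precisely when $\Stab_P(\theta)$ contains an element of order $p$; writing $\mathrm{Fix}(g)$ for the set of $\theta$ fixed by $g$, the non-regular characters form $\bigcup_{|g|=p}\mathrm{Fix}(g)$. The key translation is the Glauberman correspondence: since $\langle g\rangle$ is cyclic, hence solvable, and acts coprimely on $N$, the $g$-invariant irreducible characters of $N$ are in bijection with $\Irr(\bfC_N(g))$, so that $|\mathrm{Fix}(g)|=k(\bfC_N(g))$, where $k(\cdot)$ denotes the number of conjugacy classes. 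Thus the number of regular points is at least $|\Irr(N)|-\bigl|\bigcup_{|g|=p}\mathrm{Fix}(g)\bigr|$, and I would aim to show this is at least $2|P|$.

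To estimate $k(\bfC_N(g))$ I would exploit the factor structure. Grouping the simple factors into $P$-orbits gives a $P$-invariant decomposition $N=N_1\times\cdots\times N_t$ into homogeneous blocks, with $\Irr(N)=\prod_j\Irr(N_j)$ and $\Stab_P(\theta_1\times\cdots\times\theta_t)=\bigcap_j\Stab_P(\theta_j)$. For an element $g$ of order $p$, $\bfC_N(g)$ splits along the $\langle g\rangle$-orbits of factors: a fixed factor $S_i$ contributes $\bfC_{S_i}(g)$, while a genuine $p$-cycle of isomorphic factors contributes a single diagonal copy of the underlying simple group $S$. This yields
\[
\frac{k(\bfC_N(g))}{k(N)}=\prod_{S_i\text{ fixed}}\frac{k(\bfC_{S_i}(g))}{k(S_i)}\cdot\prod_{p\text{-cycles}}k(S)^{\,1-p},
\]
a product of genuine contractions: every nonabelian simple group has at least five conjugacy classes, so each $p$-cycle contributes a factor at most $5^{\,1-p}$, and since $g\neq1$ acts nontrivially on at least one block, at least one factor is a strict contraction. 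I would then run an induction on the number of blocks $t$, peeling off $N_t$ and combining a supply of regular orbits on $N_1\times\cdots\times N_{t-1}$ with a choice on $N_t$ whose stabilizer meets the rest trivially.

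The base case is a single $P$-orbit of factors. Passing to the stabilizer of one factor reduces this to a single nonabelian simple group $S$ with $p\nmid|S|$ acted on by a $p$-subgroup of $\Out(S)$ (note $P\cap\mathrm{Inn}(S)=1$, so the action factors through the outer automorphism group), where I must produce two regular orbits on $\Irr(S)$; here $\Out(S)$ is small, so this is a bounded check. The genuinely large case is the permutation part, present when $P$ moves more than one factor, where $|P|$ can be huge and the crude union bound $\sum_{|g|=p}|\mathrm{Fix}(g)|$ is too lossy. There I would use the multiplicative, inclusion–exclusion form of the count rather than the union bound, and invoke Lemma~\ref{lem:large orbit}: it controls the $P$-orbit sizes on $N$ itself, and since for each $g$ the number of fixed characters is exactly $k(\bfC_N(g))$, the same mechanism that forces centralizers to be small forces the fixed-character counts to be small, which is what drives the induction.

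I expect the main obstacle to be the simple-group input underlying the base case: establishing a uniform contraction $k(\bfC_S(\sigma))\le c\,k(S)$ with a single constant $c<1$ for every nontrivial coprime automorphism $\sigma$ of order $p$ of a nonabelian simple group $S$, together with the statement that a $p$-subgroup of $\Out(S)$ admits two regular orbits on $\Irr(S)$ (in particular that a nontrivial such automorphism actually moves some character). This rests on the classification of finite simple groups and on explicit information about how outer automorphisms permute conjugacy classes and characters. Once it is available, the estimates above produce far more than $2|P|$ regular points, so that obtaining two regular orbits rather than one costs nothing extra.
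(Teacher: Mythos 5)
The first thing to say is that the paper does not prove this lemma at all: its ``proof'' is the citation ``This is Corollary 2.8 in \cite{Qian}.'' So your attempt should be judged as a reconstruction of Qian's argument, and by that standard your framework is set up correctly but the proof is not there. The correct and worthwhile part: since the stabilizer of a character is a $p$-group, a character is non-regular iff it is fixed by some element of order $p$; the Glauberman correspondence (applicable since $\langle g\rangle$ is cyclic and the action is coprime) gives $|\mathrm{Fix}(g)|=k(\bfC_N(g))$; and your factorization of $\bfC_N(g)$ over fixed factors and $p$-cycles of factors, with each $p$-cycle contributing a (twisted) diagonal copy of $S$ and hence a contraction factor $k(S)^{1-p}\le 5^{1-p}$, is sound. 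This is a legitimate counting skeleton.

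The gap is that everything hard is postulated rather than proved, and the one concrete tool you invoke to close it does not apply. (1) Your base case requires, for every nonabelian simple $S$ with $p\nmid|S|$ and every nontrivial $p$-subgroup of $\Out(S)$, both the existence of two regular orbits on $\Irr(S)$ and a uniform contraction $k(\bfC_S(\sigma))\le c\,k(S)$ with $c<1$. Even the qualitative statement $k(\bfC_S(\sigma))<k(S)$ is equivalent to saying no nontrivial coprime outer automorphism fixes every class, which for simple groups is a CFSG-level theorem (class-preserving automorphisms of simple groups are inner), and the uniform constant $c$ is strictly harder; you name this as ``the main obstacle'' but supply nothing toward it, and it is precisely the content of Qian's result. (2) For the large permutation part you concede the union bound is too lossy and propose to invoke Lemma~\ref{lem:large orbit}, but that lemma is an \emph{existential} statement -- it produces a single element $x\in H$ with small $\bfC_P(x)$, i.e., one large orbit on the set $N$ -- whereas your count needs an upper bound on $k(\bfC_N(g))$ for \emph{all} order-$p$ elements $g\in P$ simultaneously, on $\Irr(N)$ rather than on $N$. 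The claim that ``the same mechanism'' controls the fixed-character counts is a non sequitur: nothing in Lemma~\ref{lem:large orbit} bounds the number of order-$p$ elements or their fixed sets, which is where the loss actually occurs (already for $P$ a Sylow $2$-subgroup of $\Sym(k)$ acting on $S^k$ the naive estimate $\sum_{|g|=p}k(\bfC_N(g))$ is delicate). (3) The inductive gluing step -- choosing a character on the last block ``whose stabilizer meets the rest trivially'' -- is exactly the coordination problem across blocks that the induction is supposed to solve, and is asserted rather than carried out. In short: correct translation of the problem into a fixed-point count, but the simple-group input and the quantitative estimate that constitute the actual proof are missing.
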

\begin{proof}
This is Corollary 2.8 in \cite{Qian}.
\end{proof}

\begin{lemma}\label{lem:p-parts} Let $S$ be any nonabelian simple group and let $p$ be a prime with $p$ dividing $|S|$. Then $|\Out(S)|_p<|S|_p$.
\end{lemma}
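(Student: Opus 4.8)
The plan is to invoke the classification of finite simple groups and to exploit that $|\Out(S)|$ is always tiny compared with $|S|_p$. First I would dispose of the non-Lie families by inspection. If $S=\Alt(n)$ with $n\ge 5$ then $|\Out(S)|\le 4$ (with $4$ occurring only for $n=6$), and if $S$ is one of the $26$ sporadic groups then $|\Out(S)|\le 2$; in both families the only prime dividing $|\Out(S)|$ is $2$. Hence for odd $p$ the inequality is immediate, and for $p=2$ one checks directly that $|S|_2\ge 4>|\Out(S)|_2$ in every case (for instance $|\Alt(5)|_2=4$, $|\Alt(6)|_2=8$, and $|S|_2\ge 8$ for all remaining alternating and sporadic groups).

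The remaining case is $S$ of Lie type over $\bbF_q$ with $q=r^f$. Here I would use the standard factorisation $|\Out(S)|=d\,f'\,g$, where the diagonal part $d$ is coprime to $r$, the field part $f'$ is a bounded multiple of $f=\log_r q$, and the graph part $g$ divides $6$; I would also use the order formula $|S|=\tfrac1d\,q^{N}\prod_i(q^{d_i}-\epsilon_i)$, with $N\ge1$ the number of positive roots. If $p=r$, then $|\Out(S)|_r$ divides a bounded multiple of $f$, while $|S|_r=q^{N}=r^{fN}$. Since $f<r^f\le|S|_r$, and since the bounded multiple exceeds $1$ only when $r\in\{2,3\}$ and then only for one of the few types ($A_\ell$ with $\ell\ge2$, $D_\ell$, $E_6$, or a very twisted type) whose root number $N$ is large enough, a short case check yields $|\Out(S)|_r<|S|_r$.

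The heart of the argument is the non-defining case $p\neq r$ with $p\mid|S|$. Writing $e$ and $e_0$ for the multiplicative orders of $q$ and of $r$ modulo $p$ (both coprime to $p$), I would base everything on lifting-the-exponent: for odd $p$, any factor $q^m\mp1$ that $p$ divides satisfies
\[
 v_p(q^m\mp 1)\ \ge\ v_p(q^{e}-1)\ =\ v_p(r^{e_0}-1)+v_p(f)\ \ge\ 1+v_p(f),
\]
so every cyclotomic factor that $p$ meets already absorbs the whole field part $f_p=p^{v_p(f)}$ with one further $p$ to spare. Since $|S|_p=\bigl(\prod_i(q^{d_i}-\epsilon_i)\bigr)_p/d_p$ and the graph part gives $g_p\le 2$ (only for $p\in\{2,3\}$), the desired inequality reduces to $d_p^{\,2}f_p g_p<\bigl(\prod_i(q^{d_i}-\epsilon_i)\bigr)_p$, where $d_p$ enters squared because passing to the simple quotient also divides the order by $d$.

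The main obstacle is controlling this last inequality when $d$ is large, which happens only for the linear and unitary groups, where $d=\gcd(n,q\mp1)$ can be as big as the rank. There I would note that $p\mid d$ forces $e\in\{1,2\}$, hence $p$ divides the whole string $q^i\mp1$, $2\le i\le n$; summing their valuations produces a $p$-power growing like $p^{(n-1)v_p(q\mp1)}$, which dominates $d_p^{\,2}f_p g_p$ because $v_p(d)\le\min\{v_p(n),v_p(q\mp1)\}$. The genuinely delicate point is $p=2$, where the $2$-adic lifting-the-exponent formula carries an extra $v_2(q+1)$ term; there the small-rank groups $\PSL_2(q)$ and $\PSL_3(q)$ and their unitary analogues must be verified by hand, and this is where I expect the tightest margins.
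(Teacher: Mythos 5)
Your proposal is correct in outline and takes a genuinely different route from the paper. The paper's proof is essentially a two-step citation: for pairs $(S,p)$ outside the short list $\{(\Alt(7),3),(\Alt(7),2),(\Alt(11),2),(\Alt(13),2),(\mathrm{M}_{22},2)\}$ it invokes Gagola's Lemma 1.2, which supplies an ordinary character $\psi\in\Irr(S)$ with $|\Aut(S)|_p<\psi(1)_p^2$; since $\psi(1)$ divides $|S|$ one gets $|\Aut(S)|_p<|S|_p^2$ and hence $|\Out(S)|_p<|S|_p$, and the five exceptional pairs are checked in the Atlas. You instead prove the inequality directly from the classification, using the order formula and the diagonal-field-graph structure of $\Out(S)$ for groups of Lie type, with the lifting-the-exponent identity $v_p(q^{e}-1)=v_p(r^{e_0}-1)+v_p(f)$ (valid for odd $p$ since $e_0\mid p-1$ is coprime to $p$) doing the real work: it shows that each cyclotomic factor met by $p$ absorbs the entire field part $f_p$ with a factor $p$ to spare, and this mechanism treats the defining and non-defining characteristic cases uniformly. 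What each approach buys: the paper's argument is three lines long but leans on a nontrivial character-theoretic input and inherits its exceptional list; yours is self-contained modulo CFSG, entirely character-free, and shows the lemma holds with no exceptional pairs at all. Two caveats on your sketch: the claim $g_p\le 2$ fails for $\mathrm{D}_4$ triality at $p=3$, where $g_3=3$ --- the inequality still survives there, but only because when $e\in\{3,6\}$ the lone factor $q^6-1$ picks up an extra $v_3(6)=1$ from LTE, so this case needs to be said explicitly; and the terminal verifications you defer (the $p=2$ checks for $\PSL_2$, $\PSL_3$ and their unitary analogues, the large-$d$ linear/unitary case, and the field-part convention $f'=2f$ for twisted types at $p=2$) are genuinely needed, though they are finite and do go through --- e.g.\ for $\PSL_2(q)$ with $q$ odd one has $v_2(q^2-1)=v_2(r^2-1)+v_2(f)\ge 3+v_2(f)$, which beats $|\Out(S)|_2=2f_2$ against $|S|_2=(q^2-1)_2/2$ with a factor $2$ of margin, matching your expectation that this is the tightest spot.
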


\begin{proof}
If the pair $(S,p)$ lies in the list $$\mathcal{L}= \{(\Alt(7),3),(\Alt(7),2),(\Alt({11}),2),(\Alt({13}),2),(\textrm{M}_{22},2)\},$$ then the lemma follows easily by checking \cite{Atlas}.

Now suppose that $(S,p)\not\in\mathcal{L}$. By \cite[Lemma 1.2]{Gagola}, there exists $\psi\in\Irr(S)$ such that $|\Aut(S)|_p<\psi(1)_p^2.$ Since $\psi(1)$ divides $|S|$, we deduce that $\psi(1)_p\leq |S|_p$ so $|\Aut(S)|_p<|S|_p^2$ which implies that $|\Out(S)|_p<|S|_p$ as wanted.
\end{proof}

\begin{lemma}\label{lem:defect zero}
Let $S$ be a nonabelian simple group and let $p$ be a prime. Then $S$ has a $p$-block of defect zero or one of the following cases holds:
\begin{itemize}
\item[$(i)$] $p=3$ and $S\cong \rm{Suz},\rm{Co}_3$ or $\Alt(n)$ for some integer $n\ge 7.$
\item[$(ii)$] $p=2$ and $S\cong \rm{M}_{12},\rm{M}_{22},\rm{M}_{24},\rm{J}_2,\rm{HS},\rm{Suz},\rm{Ru},\rm{Co}_1,\rm{Co}_3,\rm{B}$ or $\Alt(n)$ for some integer $n\ge 7.$
\end{itemize}
\end{lemma}
\begin{proof} This is Corollary 2 in \cite{GO}.
\end{proof}

Recall that if $B$ is a $p$-block of a finite group $G$ with defect $d=d(B)$ and assume $|G|_p=p^a$, then $p^{a-d}$ divides $\varphi(1)$ for all $\varphi\in\IBr(G)$ (see \cite[Corollary 3.17]{Navarro}). Furthermore, if $G$ has a block of defect zero, then $G$ has a character $\chi\in\Irr(G)$ such that $\chi^\circ\in\IBr(G)$ and $\chi(1)_p=|G|_p$ (\cite[Theorem 3.18]{Navarro}).

\begin{theorem}\label{th:pParts}
Let $S$ be a nonabelian simple group and let $p$ be a prime dividing $|S|$. Then one of the following holds.

\begin{enumerate}
\item[${(i)}$] There exists $\beta\in\IBr(S)$ such that $|\Aut(S)|_p<\beta(1)_p^2$.
\item[${(ii)}$] $p=3$ and $S\cong \Alt(7)$.
\item[${(iii)}$] $p=2$ and $S\cong \rm{M}_{22}$ or possibly $\Alt(n)$ with $n\in\{22,24,26\}$.
\end{enumerate}
\end{theorem}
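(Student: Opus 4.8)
The plan is to separate two cases according to whether $S$ has a $p$-block of defect zero, using Lemma~\ref{lem:defect zero} to enumerate the groups that do not. Suppose first that $S$ has a $p$-block of defect zero. Then by the fact recalled just before the theorem there is $\chi\in\Irr(S)$ with $\chi^\circ\in\IBr(S)$ and $\chi(1)_p=|S|_p$; setting $\beta=\chi^\circ$ we get $\beta(1)_p=|S|_p$, and since $S$ is simple, $|\Aut(S)|_p=|S|_p\,|\Out(S)|_p$. By Lemma~\ref{lem:p-parts}, $|\Out(S)|_p<|S|_p$, so
\[
|\Aut(S)|_p=|S|_p\,|\Out(S)|_p<|S|_p^{2}=\beta(1)_p^{2},
\]
and conclusion $(i)$ holds. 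Hence I may assume $S$ has no $p$-block of defect zero, so that $(S,p)$ occurs in the list of Lemma~\ref{lem:defect zero}; in particular $p\in\{2,3\}$ and $S$ is either a sporadic group from that list or an alternating group $\Alt(n)$ with $n\ge 7$.

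For the finitely many sporadic groups I would read the answer from the $p$-modular Brauer character tables: it is enough to exhibit one $\varphi\in\IBr(S)$ with $\varphi(1)_p^{2}>|S|_p\,|\Out(S)|_p$, and since $|\Out(S)|\le 2$ this is a mild requirement that is met in every case except $(\mathrm{M}_{22},2)$. For $S=\mathrm{M}_{22}$ at $p=2$ all $2$-modular irreducibles have $2$-part at most $2^{2}$, while $(i)$ would require $2$-part exceeding $2^{4}$ (as $|\Aut(\mathrm{M}_{22})|_2=2^{8}$); so $(i)$ fails, giving the sporadic part of $(iii)$.

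The substance lies in the alternating groups, where I would combine the block theory of $\Sym(n)$ and $\Alt(n)$ with the defect inequality recalled before the theorem. By the Nakayama conjecture the $p$-blocks of $\Sym(n)$ are labelled by $p$-cores, a block of $p$-weight $w$ having defect $d$ given by $p^{d}=((pw)!)_p$; choosing the largest $p$-core of size $\equiv n\pmod p$ below $n$ makes $w$, hence $d$, as small as possible. For $\varphi$ in a block of defect $d$ one has $p^{\,a-d}\mid\varphi(1)$, where $p^{a}=(n!)_p$, and in passing to $\Alt(n)$ the order and the defect drop by the same power when $p=2$, so the guaranteed $p$-part is again $p^{\,a-d}$. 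Since $|\Aut(\Alt(n))|_p=(n!)_p=p^{a}$ and $|\Out(\Alt(n))|_p\le 2$, conclusion $(i)$ follows whenever $a>2d$. The rest is arithmetic: the exponent $a$ grows like $n/(p-1)$, while $d$ is controlled by the distance from $n$ to the nearest admissible $p$-core size, which grows only like $\sqrt{n}$; so $a>2d$ holds for all but finitely many $n$.

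The hard part, and the origin of the exceptions, is the finite set of small and borderline $n$ with $a\le 2d$, where the crude estimate is inconclusive and $(i)$ must be decided directly from the Brauer degrees -- equivalently from the heights in the relevant block -- in the decomposition matrices. For $p=3$ this isolates $n=7$ as the genuine obstruction: there $(7!)_3=3^2$, the smallest block has $d=1$, and since $\Alt(7)$ has no $3$-block of defect zero every $\varphi(1)_3\le 3$, so $(i)$ fails and $(ii)$ applies, while for every $n\ge 8$ either a defect-zero block or the inequality $a>2d$ settles the matter. For $p=2$ the admissible core sizes are the triangular numbers; evaluating $a-d$ shows that $a>2d$ can fail only for small $n$ and for the window $n\in\{22,24,26\}$ lying just below $T_7=28$, and a direct look at the decomposition matrices is then needed to separate the values where a high-height Brauer character restores $(i)$ from the genuine failures. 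It is precisely for $n\in\{22,24,26\}$ that the requisite $2$-modular data are not readily available, so these are recorded only as \emph{possible} exceptions in $(iii)$. Carrying out this closing verification -- above all the bookkeeping of defects and heights under the passage $\Sym(n)\to\Alt(n)$ at $p=2$ -- is the main obstacle.
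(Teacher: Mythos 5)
Your skeleton coincides with the paper's: split on the existence of a $p$-block of defect zero (the defect-zero case gives $(i)$ via Lemma~\ref{lem:p-parts} exactly as you argue), reduce the remaining cases to the Granville--Ono list of Lemma~\ref{lem:defect zero}, and then exploit the divisibility $p^{a-d}\mid\varphi(1)$ for $\varphi$ in a block of defect $d$, so that $(i)$ follows once $d<\tfrac{1}{2}(a-\delta_{2,p})$. The one structural difference is that the paper does not re-derive the alternating-group defect estimates from the Nakayama conjecture and $p$-core combinatorics: it cites \cite[Theorem 2]{CCLT}, which supplies precisely the statement you sketch --- for $p=3$ a $3$-block of defect $d\le(a-1)/2$, and for $p=2$ either a block with $d\le(a-2)/2$ or $n\in\{7,9,11,13,22,24,26\}$ --- including the $\Sym(n)\to\Alt(n)$ defect bookkeeping at $p=2$ that you correctly single out as the main obstacle. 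So the ``hard part'' you identify is genuinely hard, and the paper buys it with a citation; your asymptotic argument ($a$ grows linearly in $n$, $d$ like $\sqrt{n}$) is right in spirit but by itself does not pin down the exceptional list, which is exactly what that reference provides. The cases $n\in\{7,9,11,13\}$ at $p=2$ are then settled by explicit Brauer degrees in \cite{GAP}, as you propose.

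Two concrete gaps need repair. First, ``read the answer from the $p$-modular Brauer character tables'' fails for the sporadic groups $\mathrm{Co}_1$ and $\mathrm{B}$ at $p=2$: their $2$-modular tables are not known. The paper instead uses that each has a $2$-block of defect $d=3$, so $2^{a-3}$ divides every Brauer degree in that block, and $3<(a-1)/2$ since $a=41$, resp.\ $21$ --- that is, your own defect mechanism, applied to sporadic groups as well as alternating ones. Second, your verification that $(i)$ fails for $(\Alt(7),3)$ is both unnecessary and invalid as argued: the theorem is a disjunction, so once $(S,p)$ lands in case $(ii)$ or $(iii)$ there is nothing left to prove (the paper simply assumes $(S,p)$ avoids those cases before deriving \eqref{eqn1}); and ``no defect-zero block'' does not yield the upper bound $\varphi(1)_3\le 3$, because divisibility by $p^{a-d}$ gives lower bounds only, and $p$-parts of Brauer degrees can even exceed $|S|_p$ --- the paper's own example of $\mathrm{McL}$ at $p=2$ has $\varphi(1)_2=2^9>|G|_2=2^7$. (The same caution applies to your $\mathrm{M}_{22}$ claim, where incidentally the maximal $2$-part of a $2$-Brauer degree is $2$, not $2^2$.) With \cite[Theorem 2]{CCLT} in place of the core-counting sketch and the small-defect device for $\mathrm{Co}_1$ and $\mathrm{B}$, your plan becomes the paper's proof.
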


\begin{proof}
Suppose first that $S$ has an irreducible character $\theta\in\Irr(S)$ of $p$-defect zero. Then $\beta=\theta^\circ\in\IBr_p(G)$ and $\beta(1)_p=\theta(1)_p=|S|_p$. Since $|\Out(S)|_p<|S|_p$, we deduce that $$\beta(1)_p^2=|S|_p^2>|S|_p\cdot |\Out(S)|_p=|\Aut(S)|_p.$$ So in this case, we are in situation $(i)$.

We assume from now on that $S$ has no  $p$-block of defect zero.
Then $S$ is isomorphic to one of the groups listed in  Lemma \ref{lem:defect zero}.
%
%\begin{itemize}
%\item $p=3$ and $S\cong \textrm{Suz},\textrm{Co}_3$ or $\Alt(n)$ for some integer $n\ge 7.$
%\item $p=2$ and $S\cong \textrm{M}_{12},\textrm{M}_{22},\textrm{M}_{24},\textrm{J}_2,\textrm{HS},\textrm{Suz},\textrm{Ru},\textrm{Co}_1,\textrm{Co}_3,\textrm{B}$ or $\Alt(n)$ for some integer $n\ge 7.$
%\end{itemize}

Write $|S|_p=p^{a}$. Then $|\Out(S)|\le 2$ and so $|\Out(S)|_p\leq p^c$ with $c=\delta_{2,p}$. Thus $|\Aut(S)|_p\leq p^{a+\delta_{2,p}}.$

If $B$ is a $p$-block of $S$ with defect $d=d(B)$, then $p^{a-d}$ divides $\beta(1)$ for all $\beta\in\IBr(B)$. Hence $\beta(1)_p^2\geq p^{2(a-d)}$ and thus we need to show that either  $(ii)$ or $(iii)$ of the theorem holds or $2(a-d)>a+\delta_{2,p}$ or equivalently
\begin{equation}\label{eqn1}
d< \frac{1}{2}(a-\delta_{2,p}).
\end{equation}

Now if Case $(ii)$ or $(iii)$ holds, then we are done. So, assume that the pair $(S,p)$ is not in those two cases.

Assume first that $p=3$. If $S\cong \textrm{Suz}$ or $ \textrm{Co}_3$, then $|\Aut(S)|_3=3^{7}$  and $S$ has a Brauer character $\beta\in\IBr(S)$ with $\beta(1)_3=3^{6}$.   In both cases, we see that $|\Aut(S)|_p<\beta(1)_p^2$.  If $S\cong\Alt(n)$, then it follows from \cite[Theorem 2]{CCLT} that $S$ has a $3$-block of defect $d$ such that $d\leq (a-1)/2<(a-\delta_{2,p})/2$, so \eqref{eqn1} holds.

Assume that $p=2$. If $S$ is a sporadic simple group which is isomorphic to neither $\textrm{Co}_1$ nor $\textrm{B}$, then by using \cite{GAP}, we can find a Brauer character $\beta\in\IBr(S)$ with $|\Aut(S)|_2<\beta(1)_2^2$. Assume that $S\cong \textrm{Co}_1$ or $\textrm{B}$. Then $\Aut(S)\cong S$ and $|S|_2=2^{41}$ and $2^{21}$, respectively. Moreover, $S$ has a $p$-block of defect $d=3$ in both cases. Thus $d< (a-1)/2$ and \eqref{eqn1} holds. Next, suppose that $S\cong \Alt(n)$ with $n\ge 5.$ It follows from \cite[Theorem 2]{CCLT} again that either $S$ has a block of defect $d$ with $d\leq (a-2)/2$ or $n\in \{7,9,11,13,22,24,26\}$. If the first case occurs, then  \eqref{eqn1} holds. Assume that the latter case occurs. Since we are not in Case $(iii)$, $n\in\{7,9,11,13\}$. For those values of $n$, we can find, using \cite{GAP}, a Brauer character $\beta\in\IBr(S)$ with $|\Aut(S)|_2<\beta(1)_p^2.$
The proof is now complete.
\end{proof}

We now give a proof of Theorem A which we restate here.
\begin{theorem}\label{th:theorem A}
Let $p$ be a prime and $G$ be a finite $p$-solvable group. Then $$\beta(1)_p^2\leq |G:\kernel(\beta)|_p$$ for all $\beta\in\IBr(G)$ if and only if  $G$ has a normal Sylow $p$-subgroup.
\end{theorem}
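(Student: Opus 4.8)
The plan is to prove both implications, with the forward direction (the inequality forces a normal Sylow $p$-subgroup) being the substantive one. The reverse direction should be routine: if $P = \OO_p(G)$ is a normal Sylow $p$-subgroup, then for any $\beta \in \IBr(G)$ the kernel contains a complement structure coming from the $p'$-part, and since $P$ is normal and $p$-regular elements miss $P$, one expects $\beta(1)_p = 1$ by the It\^{o}--Michler theorem for Brauer characters (\cite[Theorem 5.5]{Michler}). Then $\beta(1)_p^2 = 1 \le |G:\kernel(\beta)|_p$ holds trivially, so this direction is immediate.

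For the forward direction I would argue by contradiction: suppose the inequality $\beta(1)_p^2 \le |G:\kernel(\beta)|_p$ holds for all $\beta \in \IBr(G)$ but $G$ has no normal Sylow $p$-subgroup, and take $G$ to be a counterexample of minimal order. The first reduction step is to pass to quotients: if $N \trianglelefteq G$ is a nontrivial normal subgroup, then $\IBr(G/N)$ embeds in $\IBr(G)$ (characters with $N$ in their kernel), and the hypothesis is inherited by $G/N$; so by minimality $G/N$ has a normal Sylow $p$-subgroup whenever $N \ne 1$. Combining this over the relevant normal subgroups, I expect to reduce to the situation where $\OO_{p'}(G) = 1$ and where $\OO_p(G)$ is the unique minimal normal subgroup structure to analyze. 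Since $G$ is $p$-solvable with $\OO_{p'}(G)=1$, we have $\Centralizer_G(\OO_p(G)) \le \OO_p(G)$, which tightly constrains the structure.

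The heart of the argument should exploit the coprime action lemmas. Writing $P = \OO_p(G)$ and using $p$-solvability, a Hall $p'$-subgroup $H$ acts coprimely, and the failure of $P$ to be a full Sylow subgroup (equivalently, $G/P$ not being a $p'$-group, or $H$ acting nontrivially) must be converted into the existence of a Brauer character $\beta$ violating $\beta(1)_p^2 \le |G:\kernel(\beta)|_p$. Here I would invoke Lemma~\ref{lem:large orbit}: a nontrivial $p$-group acting faithfully and coprimely has an element with small centralizer, which produces a large orbit and hence (via Clifford theory / induction) a Brauer character whose degree carries a large $p$-part relative to its kernel index. The main obstacle, I expect, is the bookkeeping that relates $\beta(1)_p$ to $|G:\kernel(\beta)|_p$ precisely enough to force the strict violation: one must track how inducing from a stabilizer inflates the $p$-part of the degree while controlling the $p$-part of the kernel index, and the factor-of-$2$ exponent in $\beta(1)_p^2$ is exactly what the orbit bound $(|P|/p)^{1/p}$ in Lemma~\ref{lem:large orbit} is calibrated to defeat. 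Getting this inequality to be genuinely strict (rather than borderline) is where the $p$-solvable hypothesis and the Fong--Swan correspondence (allowing one to lift $\beta$ to an ordinary $\chi$ with $\chi(1)=\beta(1)$ and matching $p$-parts) will do the essential work, since for $p$-solvable groups Brauer degrees divide the group order and behave like ordinary degrees.
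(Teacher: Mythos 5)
There is a genuine gap: your reduction runs in the wrong direction. Since $\OO_p(G)$ lies in the kernel of \emph{every} irreducible $p$-Brauer character, the hypothesis $\beta(1)_p^2\leq |G:\kernel(\beta)|_p$ passes to $G/\OO_p(G)$, and in a minimal counterexample one must therefore have $\OO_p(G)=1$ (if $\OO_p(G)\neq 1$, minimality applied to $G/\OO_p(G)$ makes $P\OO_p(G)/\OO_p(G)=P/\OO_p(G)$ normal, hence $P\unlhd G$ at once). Your proposed configuration --- $\OO_{p'}(G)=1$ with $\Centralizer_G(\OO_p(G))\leq \OO_p(G)$ and a Hall $p'$-subgroup acting on the $p$-core --- is the standard Hall--Higman setup for \emph{ordinary} characters, but it is dead on arrival here: Brauer characters cannot detect anything inside $\OO_p(G)$, so no lower bound on $\beta(1)_p$ can ever be extracted from an action on $\OO_p(G)$; moreover, in a $p$-solvable group the conditions $\OO_p(G)=1$ and $\OO_{p'}(G)=1$ cannot both fail to hold, so your configuration never occurs in the minimal counterexample. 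The paper's setup is the mirror image of yours: the unique minimal normal subgroup $N$ is a $p'$-group (or a product of nonabelian simple $p'$-groups), and it is the Sylow $p$-subgroup $P$ that acts faithfully and coprimely \emph{on} $N$. This matters because Lemma~\ref{lem:large orbit} requires the $p$-group to be the acting group: it yields $\lambda\in\Irr(N)$ with $|\bfC_P(\lambda)|\leq (|P|/p)^{1/p}$, hence $|P:\bfC_P(\lambda)|>|P|^{1/2}$, and Clifford theory applied to $PN\unlhd G$ (normality of $PN$ comes from the quotient step plus uniqueness of $N$) produces $\varphi\in\IBr(G)$ with $\varphi(1)_p>|P|^{1/2}$; faithfulness of $\varphi$ follows from uniqueness of $N$, so $|G:\kernel(\varphi)|_p=|P|<\varphi(1)_p^2$, the desired contradiction.

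Two further points. First, your sketch omits the case where the minimal normal $p'$-subgroup $N$ is nonabelian (a product of simple $p'$-groups, which is possible under $p$-solvability); there Lemma~\ref{lem:large orbit} is replaced by Lemma~\ref{lem:coprime} (Qian's regular orbit theorem), giving a faithful $\beta\in\IBr(G)$ with $\beta(1)_p=|G|_p$ and hence $\beta(1)_p^2=|G|_p^2>|G:\kernel(\beta)|_p$. Second, the Fong--Swan theorem, which you expect to do ``the essential work,'' is not used in the paper's argument at all: the degree bookkeeping is pure Clifford theory for Brauer characters ($\theta(1)$ divides $\varphi(1)$ for $\varphi$ over $\theta\in\IBr(PN)$), and the exponent $2$ is defeated exactly as you guessed, by the bound $(|P|/p)^{1/p}<|P|^{1/2}$ --- that part of your intuition is correct, but it must be wired to the action of $P$ on the $p'$-group $N$, not to an action on $\OO_p(G)$.
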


\begin{proof} If $G$ has a normal Sylow $p$-subgroup, then $p$ does not divide the degree of any $p$-Brauer irreducible character of $G$ by the It\^{o}-Michler theorem and the result follows.

 Now, suppose that 
 \begin{equation}\label{eqn}\beta(1)_p^{2}\leq |G:\kernel(\beta)|_p\text{ for all $\beta\in\IBr(G)$}.
 \end{equation}  We need to show that $G$ has a normal Sylow $p$-subgroup.
Let $G$ be a counterexample of minimal order. Since $\OO_p(G)$ is contained in the kernel of every $p$-Brauer character of $G$, we can assume that $\OO_p(G)=1.$ Let $P$ be a Sylow $p$-subgroup of $G$. Then $P$ is nontrivial.

Observe that if $M$ is any nontrivial normal subgroup of $G$, then $G/M$ also satisfies $\eqref{eqn}$ and since $|G/M|<|G|$, by the minimality of $|G|$, we have that $G/M$ has a normal Sylow $p$-subgroup, which is $PM/M$ and so $PM\unlhd G$. 

\medskip
(1) $G$ has a unique minimal normal subgroup $N$ and $PN\unlhd G$. Moreover, if $N$ is nonabelian, then $\Centralizer_G(N)=1$. 

Suppose that $N$ is a minimal normal subgroup of $G$. Then $PN\unlhd G$ by the observation above. Now, suppose that $G$ has a different normal subgroup, say $N_1\neq N.$ Then $PN_1\unlhd G.$ By Dedekind's  modular law, we have that $$PN\cap PN_1=P(N\cap PN_1)\unlhd G.$$ Notice that $N\cap PN_1\unlhd G.$ Since $N$ is a minimal normal subgroup of $G$, either $N\cap PN_1=1$ which implies that $P\unlhd G,$ a contradiction, or $N\cap PN_1=N$. This forces $N\leq PN_1$. Hence $NN_1\unlhd PN_1.$ Since $N\cap N_1=1$ and $|PN_1|=|P|\cdot |N_1|/|P\cap N_1|$, we deduce that $|NN_1|=|N|\cdot |N_1|$ divides $|PN_1|$. Thus $|N|$ divides $|P|$ which is impossible as $\OO_p(G)=1.$ Therefore, $G$ has a unique minimal normal subgroup $N$. Assume that  $N$ is nonabelian. Since $\Centralizer_G(N)$ is a normal subgroup of $G$ and does not contain $N$,  $N\cap \Centralizer_G(N)=1$. Now $\Centralizer_G(N)=1$ as otherwise $\Centralizer_G(N)$ would possess a minimal normal subgroup of $G$ which is different from $N$.
\medskip

(2) $P$ acts faithfully and coprimely on $N$. Since $N$ is a minimal normal subgroup of a $p$-solvable group $G$ with $\OO_p(G)=1$, we deduce that $p\nmid |N|$ so $P$ acts coprimely on $N$. We next claim that $P$ acts faithfully on $N.$ Since $PN\unlhd G$ by (1) and $P$ is a Sylow $p$-subgroup of $PN$, by Frattini's argument, $G=\bfN_G(P)N.$ Furthermore, $N$ centralizes and thus normalizes $\bfC_P(N)$ and also $\bfN_G(P)$ normalizes $\bfC_P(N)$, thus $\bfC_P(N)$ is normal in $G$. Since $\OO_p(G)=1$, we must have $\Centralizer_P(N)$=1.

\medskip
(3) $N$ is not abelian. Suppose by contradiction that $N$ is abelian. Then $N$ must be an elementary abelian $r$-group for some prime $r\neq p.$ 
By (2) $P$ acts coprimely and faithfully on  $N$ and thus it also acts coprimely and faithfully on $\Irr(N)=\IBr(N)$ as the actions of $P$ on $N$ and $\Irr(N)$ are permutation isomorphic.  Therefore, by Lemma \ref{lem:large orbit} there exists $\lambda\in\Irr(N)$ such that $|\bfC_P(\lambda)|\leq (|P|/p)^{1/p}$ which implies that $|P:\bfC_P(\lambda)|>\sqrt{|P|}$. Observe that $N\bfC_P(\lambda)$ is the stabilizer in $PN$ of $\lambda$ and thus there exists $\theta\in\IBr(PN)$ with $\theta(1)=\theta(1)_p>\sqrt{|P|}$. Let $\varphi\in\IBr(G)$ be an irreducible constituent of $\theta^G.$ Then $\varphi$ is faithful (since $N$ is the unique minimal normal subgroup of $G$) and $\varphi(1)_p\ge \theta(1)_p>\sqrt{|P|}.$
However, this means that $|G:\kernel(\varphi)|_p=|P|<\varphi(1)_p^2.$

(4) The final contradiction. We now know that $N$ is a minimal normal nonabelian subgroup of $G$ and  by (2), $P$ acts faithfully and coprimely on $N$. It follows from Lemma \ref{lem:coprime} that $P$ has a regular orbit on $\Irr(N)=\IBr(N)$. So, we can find $1\neq \lambda\in\IBr(N)$ and $\beta\in\IBr(G)$ which is a constituent of $\lambda^G$ such that $\beta(1)_p=|G|_p.$ Clearly $\beta$ is faithful and hence $\beta(1)_p^2=|G|_p^2>|G:\kernel(\beta)|_p$, which is a contradiction.

This contradiction proves the theorem.
\end{proof}

%-----------------------

We now consider  the general case. The next result is Theorem B.

\begin{theorem}\label{th:p-parts}
Let $p$ be a prime and let $G$ be a finite group. Then 
\begin{equation} \label{eqn2}(\beta(1)_p)^{\mu_p}\leq |G:\kernel(\beta)|_p\end{equation}  for all $\beta\in\IBr_p(G)$  if and only if $G$ has a normal Sylow $p$-subgroup.
\end{theorem}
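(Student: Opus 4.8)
The plan is to follow the template of Theorem~\ref{th:theorem A}, the new ingredient being that a minimal counterexample need no longer be $p$-solvable. The \emph{if} direction is again immediate from the It\^o--Michler theorem. For the converse I would take a counterexample $G$ of minimal order; since $\OO_p(G)$ lies in every Brauer kernel, the hypothesis passes to $G/\OO_p(G)$, so I may assume $\OO_p(G)=1$, and every proper quotient $G/M$ (for $1\neq M\unlhd G$) inherits \eqref{eqn2} and hence has normal Sylow $p$-subgroup $PM/M$, giving $PM\unlhd G$. Exactly as in steps (1)--(2) of Theorem~\ref{th:theorem A} — and those arguments never used $p$-solvability — one obtains a \emph{unique} minimal normal subgroup $N$ with $PN\unlhd G$, and $\bfC_G(N)=1$ whenever $N$ is nonabelian. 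Because $\OO_p(G)=1$, the subgroup $N$ is never a nontrivial $p$-group.

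Next I would split on $N$. If $N$ is a $p'$-group — so that $P$ acts faithfully and coprimely on $N$, via the Frattini argument of step (2) when $N$ is abelian and via $\bfC_G(N)=1$ when $N$ is nonabelian — then the reasoning of steps (3) and (4) of Theorem~\ref{th:theorem A} applies \emph{verbatim}: Lemma~\ref{lem:large orbit} (abelian case) or Lemma~\ref{lem:coprime} (nonabelian case) produces a faithful $\beta\in\IBr(G)$ with $\beta(1)_p^2>|G:\kernel(\beta)|_p$, and since $\mu_p\ge 2$ this already contradicts \eqref{eqn2}. The only genuinely new situation is therefore $N=S^k$ with $S$ a nonabelian simple group and $p\mid |S|$.

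In that case $\bfC_G(N)=1$ embeds $G$ into $\Aut(N)=\Aut(S)\wr\Sym(k)$, so $|G|_p\le |\Aut(S)|_p^{\,k}\,(k!)_p$. The heart of the matter is to produce $\psi\in\IBr(S)$ with $\psi(1)_p^{\mu_p}>|\Aut(S)|_p$; as both sides are powers of $p$ this is equivalent to $\psi(1)_p^{\mu_p}\ge p\,|\Aut(S)|_p$. Theorem~\ref{th:pParts} supplies this at once in its generic case (i), using $\mu_p\ge 2$; for the finitely many exceptional pairs — $(\Alt(7),3)$ with $\mu_3=3$, and $(\mathrm{M}_{22},2)$ together with the possible $(\Alt(n),2)$, $n\in\{22,24,26\}$, with $\mu_2=9$ — I would exhibit a suitable $\psi$ by direct inspection of the known $p$-Brauer character degrees. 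With such a $\psi$ in hand, set $\Psi=\psi^{\otimes k}\in\IBr(N)$ and let $\beta\in\IBr(G)$ be a constituent of $\Psi^G$. By Clifford theory for Brauer characters $\beta$ is faithful (it lies over the nontrivial $\Psi$, and $N$ is the unique minimal normal subgroup) and $\beta(1)_p\ge \Psi(1)_p=\psi(1)_p^{\,k}$. Since $(k!)_p=p^{e}$ with $e=\sum_{i\ge1}\lfloor k/p^i\rfloor< k/(p-1)\le k$, we have $(k!)_p\le p^{k-1}$, whence
\[
\beta(1)_p^{\mu_p}\ \ge\ \bigl(\psi(1)_p^{\mu_p}\bigr)^{k}\ \ge\ \bigl(p\,|\Aut(S)|_p\bigr)^{k}\ =\ p^{k}|\Aut(S)|_p^{\,k}\ >\ (k!)_p\,|\Aut(S)|_p^{\,k}\ \ge\ |G|_p,
\]
contradicting \eqref{eqn2}, since $\beta$ is faithful. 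This closes the last case.

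The clean part of the argument is the degree estimate in the last display, which works uniformly once $\psi$ exists and rests only on the elementary bound $(k!)_p<p^{k}$ together with $\mu_p\ge 2$. I expect the main obstacle to be solely the existence of $\psi$ for the exceptional simple groups — in particular checking the alternating groups $\Alt(22),\Alt(24),\Alt(26)$ at $p=2$, where one must confirm a $2$-Brauer character with $2$-part large enough that $\psi(1)_2^{9}>|\Aut(S)|_2$; it is precisely these groups, together with $\mathrm{M}_{22}$ and $\Alt(7)$, that force the values $\mu_2=9$ and $\mu_3=3$ rather than $2$.
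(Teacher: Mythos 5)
Your proposal is correct and follows essentially the same route as the paper: unique minimal normal subgroup $N$, the orbit lemmas (Lemmas~\ref{lem:large orbit} and \ref{lem:coprime}) when $N$ is a $p'$-group, and for $N\cong S^k$ the embedding $G\le \Aut(S)\wr\Sym(k)$ combined with Theorem~\ref{th:pParts} plus explicit checks of the exceptional pairs — your criterion $\psi(1)_p^{\mu_p}\ge p\,|\Aut(S)|_p$ is exactly the negation of the paper's inequality \eqref{eqn3}, and the needed $\psi$ does exist in each exceptional case (degree $6$ for $(\Alt(7),3)$, $2$-part $2$ for $\mathrm{M}_{22}$, and $2$-parts $2^5,2^3,2^4$ coming from $D^{(n-3,3)}$ for $n=22,24,26$). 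The only cosmetic differences are that the paper handles a $p$-solvable minimal normal subgroup by noting $G/N$ is $p$-solvable and citing Theorem~\ref{th:theorem A} rather than re-running its steps (3)--(4), and your bound $(k!)_p<p^k$ plays the role of the paper's equivalent estimate $|H|_p<p^{k/(p-1)}$ (your version also bypasses the paper's inessential appeal to Swan's theorem).
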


\begin{proof}
Notice that $\mu_p\ge 2.$ If $G$ has a normal Sylow $p$-subgroup, then $p$ divides the degree of no irreducible $p$-Brauer character of $G$ by \cite[Theorem 5.5]{Michler}, hence \eqref{eqn2} is vacuously true.

Conversely, assume that \eqref{eqn2} holds for all $\beta\in\IBr(G)$. We need to show that $G$ has a normal Sylow $p$-subgroup. Since $\OO_p(G)$ is contained in the kernel of every irreducible $p$-Brauer character of $G$, we can assume that $\OO_p(G)=1.$

Let $P$ be a Sylow $p$-subgroup of $G$ and let $N$ be any nontrivial normal subgroup of $G$. Then $G/N$ satisfies \eqref{eqn2} for all $\beta\in\IBr(G)$ and thus $G/N$ has a normal Sylow $p$-subgroup $PN/N$ by induction on $|G|$, hence $PN\unlhd G$. It follows that $G/N$ is $p$-solvable and thus if $N$ is $p$-solvable, then $G$ is $p$-solvable and the result follows from Theorem \ref{th:theorem A}.  Therefore, we can assume that every nontrivial minimal normal subgroup of $G$ is not $p$-solvable.

As in the proof of Claim (1) of Theorem \ref{th:theorem A}, $G$ has a unique minimal normal subgroup $N$, $PN\unlhd G$ and $\Centralizer_G(N)=1$. Thus $N\cong S^k$ for some nonabelian simple group $S$ with $p$ dividing $|S|$ and some integer $k\ge 1.$ 

Since $\Centralizer_G(N)=1$, $G$ embeds into $\Aut(N)\cong \Aut(S)\wr \Sym(k).$
Let $B=\Aut(S)^k\cap G.$ Then $G=BH$ where $H\cong G/B$ is a subgroup of $\Sym(k)$.

Write $|S|_p=p^{a}$ and $|\Out(S)|_p=p^{c}$. Let $\theta\in\IBr_p(S)$ be non-principal with $p\mid \theta(1)$ and let $\phi=\theta^k\in\IBr_p(N)$. Write $\theta(1)_p=p^{b}$ for some integer $b\ge 1$. Let $\varphi\in\IBr(G)$ lie over $\phi.$ Since $N$ is a unique  minimal normal subgroup of $G$ and $\theta\in\IBr(S)$ is faithful, $\phi=\theta^k$ is faithful, so is $\varphi.$  Since $G/N$ is $p$-solvable, $e:=\varphi(1)/\phi(1)$ divides $|G/N|$ by  Swan's Theorem \cite[Theorem 8.22]{Navarro}. By our hypothesis, we have $(\varphi(1)_p)^{\mu_p}\leq |G|_p$.  It follows that $$|H|_p\cdot |B/N|_p\cdot |N|_p\ge (\varphi(1)_p)^{\mu_p}=e_p^{\mu_p}(\phi(1)_p)^{\mu_p}=e_p^{\mu_p}(\theta(1)_p)^{k\mu_p}\ge p^{bk\mu_p}.$$ 
As $B/N\leq \Out(S)^k,$ we have \[|B/N|_p\leq |\Out(S)|_p^k\leq p^{ck}.\]
Furthermore, since $H\leq \Sym(k)$, we see that $|H|_p<p^{k/(p-1)}$ so 
\[p^{bk\mu_p}<p^{ak+k/(p-1)+ck}\leq p^{k(a+c+1)},\] which implies that $b\mu_p<a+c+{1}/{(p-1)}\le a+c+1$ and so
\begin{equation}\label{eqn3}
b\mu_p\leq  a+c.
\end{equation}

\medskip

We now apply Theorem \ref{th:pParts} for the nonabelian simple group $S$. We consider the following cases.

(a) $p=3$ and $S\cong\Alt(7)$.  In this case, we can take $\theta\in\IBr(S)$ with $\theta(1)=6$ so $b=1$. Furthermore, $a=2$ and $c=0$ so $a+c=2<3=b\mu_p$, contradicting \eqref{eqn3}.

(b)   $p=2$ and  $S\cong \textrm{M}_{22}$ or $\Alt(n)$ with $n=22,24$ or $26.$
Assume first that $S\cong  \textrm{M}_{22}.$ Then $S$ has a Brauer character $\theta\in\IBr(S)$ with $\theta(1)_2=2$ so $b=1$. Moreover, $a=7$ and $c=1$ so $a+c=8<9=b\mu_p$, contradicting \eqref{eqn3}.

Assume next that $S\cong\Alt(n)$ with $n\in\{22,24,26\}$. For these cases, we will use some $2$-Brauer character degrees of $\Alt(n)$ which can be found in Proposition 5.2 in \cite{BW}, for example.

Let $\lambda=(n-3,3)$ be a partition of $n$. Since $n\ge 22,$ we know that  $\lambda$ is a $2$-regular partition of $n$. It follows from \cite{Benson} that the simple module in characteristic $p$, $D^\lambda$, labelled by $\lambda$  remains irreducible upon restriction to $\Alt(n)$ and its dimension is given in \cite[Proposition 5.2]{BW}.  We have
$$\textrm{dim}D^\lambda=\begin{cases}
     \frac{1}{6}n(n-2)(n-7) & \text{ if $n\equiv 0$ mod $4$}, \\
       \frac{1}{6}n(n-1)(n-5) & \text{ if $n\equiv 1$ mod $4$}, \\
         \frac{1}{6}(n-1)(n-2)(n-6)& \text{ if $n\equiv 2$ mod $4$}, \\
           \frac{1}{6}(n+1)(n-1)(n-6) & \text{ if $n\equiv 3$ mod $4$}.
      
\end{cases}$$ 

If $n=22$ or $26$, then $S$ has a Brauer character $\theta\in\IBr(S)$ of degree $\theta(1)=(n-1)(n-2)(n-6)/6=1120$ or $2000$, so $b=5$ or $4$, respectively. In the first case, we see that $a+c=18+1=19<9\cdot 5=b\mu_p$ and in the second case $a+c=22+1<9\cdot 4=b\mu_p$. 

Assume next that $S\cong\Alt(24)$. Then $S$ has a Brauer character  $\theta\in\IBr(S)$ of degree $\theta(1)=24(24-2)(24-7)/6=1496$ and $b=3$. So $a+c=21+1=22<9\cdot 3=b\mu_p$, contradicting \eqref{eqn3}.

(c) For the remaining cases,  we have $\mu_p\ge 2$ and  there exists $\theta\in\IBr(S)$ with $a+c<2b\leq b\mu_p$ , where $p^b=\theta(1)_p$. However this contradicts \eqref{eqn3}.
The proof is now complete.
\end{proof}

\section{$p'$-Parts of Brauer character degrees}\label{sec:pprime-Part}
We first need the following result.
\begin{lemma}\label{lem:Sylow order}
If $S$ is a nonabelian simple group, then $|S|>|S|_r^2$ for all primes $r.$
\end{lemma}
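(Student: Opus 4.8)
The plan is to prove the equivalent inequality $|S|_r < |S|_{r'}$, which then yields $|S| = |S|_r \cdot |S|_{r'} > |S|_r^2$; this is only nontrivial for primes $r$ dividing $|S|$, so I would argue by the classification of finite simple groups. The two combinatorial families are quickly handled. For the $26$ sporadic groups it is a finite verification from the orders listed in \cite{Atlas}. For $S \cong \Alt(n)$ with $n \ge 5$, Legendre's formula gives $v_r(n!) = (n - s_r(n))/(r-1) \le (n-1)/(r-1)$, where $s_r(n)$ is the base-$r$ digit sum, so $|S|_r \le |\Sym(n)|_r = r^{v_r(n!)} \le (r^{1/(r-1)})^{n-1} \le 2^{\,n-1}$. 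Since $|S| = n!/2$ grows super-exponentially, comparing $\log(n!)$ with $2(n-1)\log 2$ shows $|S|_r^2 \le 4^{\,n-1} < n!/2 = |S|$ for all large $n$ (e.g.\ $n \ge 11$), and the finitely many degrees $5 \le n \le 10$ are checked prime by prime.

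The substantial case is $S$ of Lie type over $\bbF_q$ with $q = p^f$. Here I would use the order formula $|S| = \frac{1}{d}\,q^N \prod_{i=1}^{\ell}(q^{d_i}-1)$ (with the analogous factorization for twisted types), where $N$ is the number of positive roots, $\ell$ the rank, the $d_i$ the invariant degrees satisfying $\sum_i (d_i - 1) = N$, and $d$ a small $p'$-number (at most $\ell+1$). For the defining prime $r = p$ one has $|S|_p = q^N$ and $|S|_{p'} = \frac1d \prod_i(q^{d_i}-1)$, and the elementary bound $q^{d_i}-1 > q^{d_i-1}(q-1)$ yields $\prod_i(q^{d_i}-1) > (q-1)^{\ell} q^{N}$. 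Hence $|S|_{p'} > \frac{(q-1)^\ell}{d}\,q^N > |S|_p$ provided $(q-1)^\ell > d$, which holds for all $q \ge 3$ using $d \le \ell+1$; in fact this places $|S|_p = q^N$ comfortably below $\sqrt{|S|}\approx q^{N+\ell/2}$.

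The hard part, and the step I expect to be the main obstacle, is the cross-characteristic primes $r \ne p$ together with the small fields $q \in \{2,3\}$ and the twisted and low-rank groups, where the product estimate above is tight and the crude bound $|S|_r \le |S|_{p'}$ is far too weak. For these I would control the exponent $v_r\!\big(\prod_i(q^{d_i}-1)\big)$ by a lifting-the-exponent/Zsygmondy analysis, using $v_r(q^{d}-1) \le v_r(q^{e}-1) + v_r(d)$ with $e = \mathrm{ord}_r(q)$, to show that no single prime $r$ absorbs more than a small proportion of $\prod_i(q^{d_i}-1)$, so that $|S|_r < \sqrt{|S|}$. The residual finitely many small-rank groups over $\bbF_2$ and $\bbF_3$ left by these estimates (several of which are either non-simple or isomorphic to alternating groups already treated) would then be dispatched by direct inspection of their orders.
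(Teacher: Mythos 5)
Your overall route---a direct order analysis through the classification---is workable in principle, but as written the decisive step contains a concrete error and is otherwise only a plan. The single tool you name for the cross-characteristic case, the bound $v_r(q^d-1)\le v_r(q^e-1)+v_r(d)$ with $e=\mathrm{ord}_r(q)$ (writing $v_r$ for the $r$-adic valuation), is \emph{false} for $r=2$: for odd $q$ one has $e=1$, while lifting the exponent gives $v_2(q^d-1)=v_2(q-1)+v_2(q+1)+v_2(d)-1$ for even $d$, so already $q=3$, $d=2$ yields $v_2(8)=3>v_2(2)+v_2(2)=2$, and the discrepancy $v_2(q+1)-1$ is unbounded over $q\equiv 3\pmod 4$. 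Since $r=2$ is precisely the prime at which Sylow subgroups of Lie-type groups are largest, and hence where the inequality $|S|_r<\sqrt{|S|}$ is tightest, this is the case your bound must carry; the repair is the standard $r=2$ form of lifting the exponent (replace $q^e-1$ by $q^2-1$), after which the bookkeeping over all untwisted and twisted types, ranks, and small fields still remains to be executed. You candidly flag this as ``the main obstacle,'' but flagging it is not doing it, so the proposal stops short of a proof exactly where the real difficulty lies.

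It is worth contrasting this with the paper's proof, which bypasses Lie-type order estimates entirely. If $S$ has an $r$-block of defect zero, then some $\chi\in\Irr(S)$ satisfies $\chi(1)_r=|S|_r$, and since $\sum_{\psi\in\Irr(S)}\psi(1)^2=|S|$ forces $\chi(1)^2<|S|$, one gets $|S|>\chi(1)^2\ge\chi(1)_r^2=|S|_r^2$ at once. By the Granville--Ono theorem (Lemma \ref{lem:defect zero}), the simple groups lacking an $r$-block of defect zero occur only for $r\in\{2,3\}$ and form a short list: a handful of sporadic groups (checked by computer) and $\Alt(n)$, which is handled by exactly the elementary factorial estimates in your second paragraph ($|\Alt(n)|_3<3^{n/2}$ and $|\Alt(n)|_2<2^{n-1}$, then induction on $n$). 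So your alternating-group analysis is sound and essentially matches the paper's, but everything you label ``the substantial case'' and ``the hard part'' is dissolved by the defect-zero trick; if you want to salvage your approach as a self-contained alternative, the cross-characteristic analysis with the corrected $r=2$ bound is the part you would actually have to write out.
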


\begin{proof}
We can assume that $r$ divides $|S|$, where $S$ is a finite nonabelian simple group. If $S$ has an $r$-block of defect zero then it has a character $\chi\in\Irr(S) $ with $\chi(1)_r=|S|_r$ and thus $|S|>\chi(1)^2\ge \chi(1)_r^2=|S|_r^2$ as wanted. So, we can assume that $S$ has no $r$-block of defect zero. By Lemma \ref{lem:defect zero}, we consider the following cases:

$(i)$ Case $r=3$. If $S$ is isomorphic to one of the sporadic simple groups in Lemma \ref{lem:defect zero} (i), then the lemma can be checked using \cite{GAP}. Now assume that $S\cong \Alt(n)$ for some $n\ge 7$. Firstly, we know that $|\Alt(n)|_3<3^{n/2}$ for all $n\ge 7.$ Now for the inequality $|S|>|S|_3^2$ it suffices to show  $n!>2\cdot3^{n}.$ This latter inequality can be proved using induction on $n\ge 7$.

$(ii)$ Case $r=2$. Again, if $S$ is one of the sporadic simple group in Lemma \ref{lem:defect zero}(ii) or $\Alt(7)$, then the result follows by checking \cite{GAP}. So, assume that $S\cong\Alt(n)$ with $n\ge 8.$ We have that $|S|_2=|\Alt(n)|_2<2^{n-1}$ and thus it suffices to show that $n!>2\cdot 2^{2n-2}=2^{2n-1}$. Now this inequality can be proved using induction on $n\ge 8$.
\end{proof}

We now give a proof of Theorem C. The proof is similar to that of Theorem 1 in \cite{CH}.
\begin{theorem}\label{th:co-degree} Let $p$ be a prime and let $G$ be a finite group. If $G$ has an irreducible  $p$-Brauer character $\varphi\in\IBr(G)$ such that $\varphi(1)_{r'}\ge |G:\kernel(\varphi)|_{r'}$ for some prime $r$, then $G/\kernel(\varphi)$ is not a nonabelian simple group.
\end{theorem}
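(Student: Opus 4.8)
The plan is to prove the contrapositive: assuming $G/\kernel(\varphi)$ is a nonabelian simple group, I would derive that the hypothesis $\varphi(1)_{r'}\ge |G:\kernel(\varphi)|_{r'}$ must fail for every prime $r$. Set $K=\kernel(\varphi)$ and $S=G/K$, a nonabelian simple group. Since $\varphi$ has kernel $K$, it factors through $S$, so we may regard $\varphi\in\IBr(S)$ as a faithful irreducible $p$-Brauer character and we must show $\varphi(1)_{r'}<|S|_{r'}$ for all primes $r$.

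The key tool is Lemma~\ref{lem:Sylow order}, which gives $|S|>|S|_r^2$ for every prime $r$. Since every Brauer character degree of $S$ is bounded above by some ordinary character degree $\chi(1)$ (as noted in the introduction), and since $\chi(1)$ divides $|S|$ for the simple group $S$, I would first bound $\varphi(1)$. The cleanest route is: because $\varphi(1)^2 \le \chi(1)^2 \le |S|^2$ is too weak on its own, I instead use that $\varphi(1)$ divides no more than $|S|$ and combine the $r$-part and $r'$-part separately. Writing $|S|=|S|_r\cdot|S|_{r'}$, the inequality $|S|>|S|_r^2$ rearranges to $|S|_{r'}>|S|_r$. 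The goal then becomes comparing $\varphi(1)_{r'}$ with $|S|_{r'}$.

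The main step is to show $\varphi(1)<|S|_{r'}$, which forces $\varphi(1)_{r'}\le\varphi(1)<|S|_{r'}$ and contradicts the hypothesis. Since $\varphi(1)\le\chi(1)$ for some $\chi\in\Irr(S)$ with $\chi(1)>1$, and $\chi(1)$ divides $|S|$, I would argue that $\chi(1)$, being a proper divisor forced by simplicity, satisfies $\chi(1)\le |S|/|S|_r < |S|_{r'}$ is not automatic, so the argument must instead track the two cases $r\mid|S|$ and $r\nmid|S|$. If $r\nmid|S|$, then $|G:K|_{r'}=|S|=|S|_{r'}$ while $\varphi(1)_{r'}=\varphi(1)\le\chi(1)<|S|$, giving the strict inequality directly. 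If $r\mid|S|$, then using $\chi(1)\mid|S|$ one has $\chi(1)_{r'}\mid|S|_{r'}$ and $\chi(1)_r\le|S|_r$, and Lemma~\ref{lem:Sylow order} supplies the room needed to conclude $\varphi(1)_{r'}\le\chi(1)_{r'}<|S|_{r'}$ after checking that $\chi(1)<|S|$ strictly (true since $S$ is nonabelian simple, hence $|S|>\chi(1)$ for all irreducible degrees).

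The step I expect to be the main obstacle is pinning down the strict inequality $\varphi(1)_{r'}<|S|_{r'}$ uniformly, since $\varphi(1)_{r'}$ need not divide $|S|_{r'}$ (Brauer degrees behave badly under divisibility, as emphasized in the introduction). The fix is to pass through an ordinary character $\chi$ with $\varphi$ a constituent of $\chi^\circ$, so that $\varphi(1)\le\chi(1)$ and $\chi(1)\mid|S|$ restore divisibility; then Lemma~\ref{lem:Sylow order} does the quantitative work. I would follow the structure of Theorem~1 in \cite{CH}, replacing the divisibility hypothesis used there by these inequalities at each point.
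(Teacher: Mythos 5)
Your high-level plan is the paper's: reduce to a faithful $\varphi$ on the simple group $S=G/\kernel(\varphi)$, pass to an ordinary $\chi\in\Irr(S)$ with $\varphi(1)\le\chi(1)$, and play this against Lemma~\ref{lem:Sylow order}. But your execution of the case $r\mid|S|$ contains a genuine gap, in fact two invalid steps. First, $\varphi(1)\le\chi(1)$ does \emph{not} imply $\varphi(1)_{r'}\le\chi(1)_{r'}$: taking $r'$-parts is not monotone (e.g.\ $15\le 16$ but $15_{2'}=15>1=16_{2'}$), and since $\varphi(1)$ need not divide $|S|$ there is no divisibility to fall back on --- passing through $\chi$ ``restores divisibility'' only for $\chi(1)$, never for $\varphi(1)$. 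Second, $\chi(1)<|S|$ does not yield $\chi(1)_{r'}<|S|_{r'}$: a priori $\chi(1)$ could equal $|S|_{r'}$ times a proper divisor of $|S|_r$, in which case $\chi(1)_{r'}=|S|_{r'}$. The phrase ``Lemma~\ref{lem:Sylow order} supplies the room needed'' conceals exactly the estimate that is missing. (A smaller slip of the same kind occurs in the case $r\nmid|S|$, where you assert $\varphi(1)_{r'}=\varphi(1)$; this is unjustified for Brauer degrees, though harmless there since only $\varphi(1)_{r'}\le\varphi(1)$ is used.)

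The missing ingredient is the elementary bound $\chi(1)^2<|S|$, which follows from $\sum_{\psi\in\Irr(S)}\psi(1)^2=|S|$ together with the presence of the trivial character. With it, your declared main step $\varphi(1)<|S|_{r'}$ closes uniformly, with no case split and no divisibility of $\chi(1)$ at all: Lemma~\ref{lem:Sylow order} gives $|S|_r<|S|_{r'}$, hence $|S|=|S|_r\,|S|_{r'}<|S|_{r'}^2$, and then
\[
\varphi(1)^2\;\le\;\chi(1)^2\;<\;|S|\;<\;|S|_{r'}^2,
\]
so $\varphi(1)_{r'}\le\varphi(1)<|S|_{r'}$, negating the hypothesis. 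This is precisely the paper's argument, phrased there as a contradiction: the hypothesis forces $|S|_{r'}\le\varphi(1)\le\chi(1)$, whence $|S|_{r'}^2<|S|$, i.e.\ $|S|<|S|_r^2$, against Lemma~\ref{lem:Sylow order}. You dismissed ``$\chi(1)^2\le|S|^2$'' as too weak --- correctly --- but the fix was the strict bound $\chi(1)^2<|S|$, not the two-case divisibility analysis you substituted for it.
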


\begin{proof}
Suppose by contradiction that $G$ has an irreducible  $p$-Brauer character $\varphi\in\IBr(G)$ such that $\varphi(1)_{r'}\ge |G:\kernel(\varphi)|_{r'}$ for some prime $r$ but $G/\kernel(\varphi)$ is a nonabelian simple group. Clearly, we can assume that $\kernel(\varphi)=1$ and thus $\varphi(1)_{r'}\ge |G|_{r'}$ where $G$ is a nonabelian simple group.

We see that $G$ has an ordinary character $\chi\in\Irr(G)$ such that $\chi^\circ$, the restriction of $\chi$ to the set of $p$-regular elements of $G$, contains $\varphi$ as an irreducible constituent. Thus $\chi(1)=\chi^\circ(1)\ge \varphi(1)\ge |G|_{r'}$. Thus $|G|_{r'}^2\leq \chi(1)^2<|G|$ which implies that $|G|_{r'}<|G|_r$ or $|G|<|G|_r^2.$ However, this would contradict Lemma \ref{lem:Sylow order}. The proof is now complete.
\end{proof}

%\begin{lemma}\label{lem:large p-part}
%Let $r$  be a prime and let $S$ be a nonabelian simple group. Then either $|S|_{r'}>r|\Aut(S)|_r$ or one of the following holds.
%\begin{itemize}
%\item[\rm(1)] $S=\PSL_2(r)$.
%\item[\rm(2)] $r=2$ and $S\cong \Alt(5),\Alt(6),\PSL_3(4)$.
%\item[\rm(3)] $r=3,S=\PSL_2(8)$.
%\item[\rm(4)] $r=5$ and $S\cong \Alt(5)$.

%\end{itemize}
%\end{lemma}

We now finally give a proof of Theorem D.

\begin{theorem}
Let $p$ be a prime and $G$ be a finite group. Then $\varphi(1)_{p'}\ge |G:\kernel(\varphi)|_{p'}$ for all $\varphi\in\IBr(G)$ with $\varphi(1)>1$ if and only if $G$ has  a normal Sylow $p$-subgroup $P$ and $G/P$ is abelian.
\end{theorem}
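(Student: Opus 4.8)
The plan is to prove both directions, organized around the observation that the stated conclusion is equivalent to the assertion that \emph{every} irreducible $p$-Brauer character of $G$ is linear. For the ``if'' direction, if $G$ has a normal Sylow $p$-subgroup $P$ with $G/P$ abelian, then $P=\OO_p(G)$ lies in the kernel of every $\varphi\in\IBr(G)$, so each such $\varphi$ is inflated from $G/P$; since $G/P$ is an abelian $p'$-group, $\IBr(G/P)=\Irr(G/P)$ consists of linear characters, whence $\varphi(1)=1$ for all $\varphi$ and the hypothesis is vacuously satisfied. Conversely, by the It\^{o}--Michler theorem (\cite[Theorem 5.5]{Michler}) the equality $\varphi(1)_p=1$ for all $\varphi$ forces a normal Sylow $p$-subgroup $P$, and then $\IBr(G)=\IBr(G/P)=\Irr(G/P)$ shows $G/P$ is abelian exactly when all these characters are linear. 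Thus the forward direction reduces to: \emph{if the displayed inequality holds for every nonlinear $\varphi$, then $G$ has no nonlinear $p$-Brauer character.}

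To prove this I would argue by a minimal counterexample $G$. First I would dispose of the case $p\nmid|G|$: there $\IBr(G)=\Irr(G)$, and any nonlinear $\varphi$ has $\varphi(1)<|G:\kernel(\varphi)|=|G:\kernel(\varphi)|_{p'}$, contradicting the hypothesis, so $G$ is abelian and the conclusion holds. Hence I may assume $p\mid |G|$. Since $\OO_p(G)$ lies in every Brauer kernel, passing to $G/\OO_p(G)$ (which preserves both hypothesis and conclusion) lets me assume $\OO_p(G)=1$. For every nontrivial $M\unlhd G$ the quotient $G/M$ inherits the hypothesis, so by minimality $G/M$ satisfies the conclusion; in particular every Brauer character of $G/M$ is linear, $G/M$ has normal Sylow $p$-subgroup $PM/M$ (so $PM\unlhd G$), and $G/M$ is $p$-solvable. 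Using $PM\unlhd G$ for all such $M$, the Dedekind-modular-law argument of Claim~(1) in Theorem~\ref{th:theorem A}, together with $\OO_p(G)=1$ and $P\neq 1$, produces a \emph{unique} minimal normal subgroup $N\cong S^k$.

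The heart of the argument is then to locate a nonlinear $\varphi$ violating the inequality. Since $G$ is a counterexample, some $\varphi\in\IBr(G)$ is nonlinear. If $N\le\kernel(\varphi)$ then $\varphi$ is inflated from $G/N$, whose Brauer characters are all linear---impossible; so $N\not\le\kernel(\varphi)$, and by minimality of $N$ and its uniqueness, $\kernel(\varphi)=1$, i.e.\ $\varphi$ is faithful. Now $\varphi$ lies over a nontrivial $\phi\in\IBr(N)$, and the key estimate I would establish is $\phi(1)_{p'}<|N|_{p'}$. For abelian $N$ this is immediate, since $\phi(1)=1<|N|=|N|_{p'}$. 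For nonabelian $N$ one writes $\phi=\phi_1\times\cdots\times\phi_k$ with some $\phi_i$ nontrivial; for each nontrivial $\phi_i\in\IBr(S)$ one has $\phi_i(1)<\sqrt{|S|}<|S|_{p'}$, where the last inequality is precisely $|S|_p^2<|S|$ from Lemma~\ref{lem:Sylow order}, and multiplying gives $\phi(1)_{p'}<|S|_{p'}^k=|N|_{p'}$.

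To finish, since $G/N$ is $p$-solvable, Swan's theorem (\cite[Theorem 8.22]{Navarro}) gives that $e:=\varphi(1)/\phi(1)$ divides $|G:N|$, so that
\[
\varphi(1)_{p'}=e_{p'}\,\phi(1)_{p'}\le |G:N|_{p'}\,\phi(1)_{p'}<|G:N|_{p'}\,|N|_{p'}=|G|_{p'}=|G:\kernel(\varphi)|_{p'},
\]
contradicting the hypothesis. The hardest part is the nonabelian case of the minimal normal subgroup: the inequality to be verified runs ``the wrong way,'' so one must actually exhibit a faithful nonlinear character whose $p'$-part of degree is strictly smaller than $|G|_{p'}$, and this hinges on the uniform bound $\theta(1)_{p'}<|S|_{p'}$ for nontrivial $\theta\in\IBr(S)$. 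That bound is exactly what Lemma~\ref{lem:Sylow order} supplies; once it is in place, Clifford theory and Swan's theorem make the $p'$-parts line up automatically, and the abelian case is essentially free.
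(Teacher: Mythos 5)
Your proof is correct, but it takes a genuinely different and substantially lighter route than the paper's. The paper fixes a faithful $\beta\in\IBr(G)$ with $p\mid\beta(1)$, converts the hypothesis into the inequality $|G|_p/\beta(1)_p^2>|G|_{p'}$ \eqref{eqn5}, and then refutes that inequality by \emph{constructing} Brauer characters with large $p$-part of degree: this requires the orbit theorems (Lemmas~\ref{lem:large orbit} and~\ref{lem:coprime}) when $N$ is a $p'$-group, and the heavy simple-group result Theorem~\ref{th:pParts} (with its exceptional cases $\Alt(7)$, $\mathrm{M}_{22}$, $\Alt(22)$, $\Alt(24)$, $\Alt(26)$ handled by ad hoc Brauer-degree computations) when $N\cong S^k$. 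You instead bound the $p'$-part of the degree of an \emph{arbitrary} faithful nonlinear $\varphi$ from above: every nontrivial $\theta\in\IBr(S)$ lifts under an ordinary character, so $\theta(1)<\sqrt{|S|}$, and Lemma~\ref{lem:Sylow order} gives $\sqrt{|S|}<|S|_{p'}$, whence $\phi(1)_{p'}<|N|_{p'}$ for every nontrivial $\phi\in\IBr(N)$ (the abelian case being trivial since $\OO_p(G)=1$ forces $N$ to be a $q$-group, $q\neq p$); Swan's theorem then yields $\varphi(1)_{p'}=e_{p'}\phi(1)_{p'}<|G:N|_{p'}|N|_{p'}=|G|_{p'}$, directly contradicting the hypothesis. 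This bypasses Theorem~\ref{th:pParts} and both orbit lemmas entirely, needs no case split on whether $p$ divides $|S|$, and depends on the classification only through Lemma~\ref{lem:Sylow order} (i.e., the Granville--Ono input of Lemma~\ref{lem:defect zero}), exactly as the paper's Theorem C does; the paper's heavier route buys nothing extra here beyond reusing machinery already developed for Theorems A and B. Two expository nits: your sentence ``produces a unique minimal normal subgroup $N\cong S^k$'' is premature, since $N$ may be elementary abelian (as your own subsequent case division acknowledges), and the bound $\phi_i(1)<\sqrt{|S|}$ for \emph{Brauer} characters should be justified in one line via an ordinary lift $\chi$ with $\phi_i$ a constituent of $\chi^\circ$ and $\chi(1)^2<|S|$, since the square-sum identity fails for $\IBr$; also note that in your Dedekind step the branch $P\unlhd G$ is immediately absurd because $\OO_p(G)=1$ and $p\mid|G|$.
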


\begin{proof} If $G$ has a normal Sylow $p$-subgroup $P$ such that $G/P$ is abelian, then $\IBr(G)=\Irr(G/P)$ contains only linear characters, so the `if' part of the theorem follows.

Let $P$ be a Sylow $p$-subgroup of $G$.
We now work to establish the `only if' part of the theorem. Suppose that 
\begin{equation}\label{eqn4} \varphi(1)_{p'}\ge |G:\kernel(\varphi)|_{p'} \text{ for all } \varphi\in\IBr(G) \text{ with } \varphi(1)>1.
\end{equation}

 If $P\unlhd G,$ then $\IBr(G)=\IBr(G/P)=\Irr(G/P)$ where $G/P$ is a $p'$-group.  Suppose by contradiction that $G/P$ is not abelian. Then we can find $\chi\in\Irr(G/P)$ with $\chi(1)>1$. By our hypothesis, we have $$\chi(1)_{p'}=\chi(1)\ge |G/P:\kernel(\chi)|_{p'}=|G/P:\kernel(\chi)|.$$ Notice that $\chi$ is a character of $(G/P)/\kernel(\chi)$ so the previous inequality occurs only when $\chi(1)=1$, a contradiction. Thus $G/P$ is abelian as wanted. Therefore, it suffices to show that $P\unlhd G.$

Suppose by contradiction that $P$ is not normal in $G$. Since $\OO_p(G)$ is contained in the kernel of every irreducible $p$-Brauer character of $G$, we can assume  $\OO_p(G)=1.$ 

\medskip
(1) $G$ has a unique minimal normal subgroup $N$ and $PN\unlhd G$. Moreover,  if $N$ is nonabelian, then $\Centralizer_G(N)=1$. Let $N$ be any minimal normal subgroup of $G$. Observe that every quotient of $G$ satisfies \eqref{eqn4}.  Thus $G/N$ has a normal Sylow $p$-subgroup $PN/N$ hence $PN\unlhd G$.
Suppose that $G$ has two distinct minimal normal subgroups, say $N_1$ and $N_2$. Then $PN_i\unlhd G$ for $i=1,2$. Argue as in the proof of Claim (1) of Theorem \ref{th:theorem A}, we get a contradiction. Thus $N$ is the unique minimal normal subgroup of $G$. It follows that if $N$ is nonabelian, then $\bfC_G(N)=1$. 

\medskip
(2) Since $G/N$ has a normal Sylow $p$-subgroup,  if $\varphi\in\IBr(G/N)$, then $p\nmid \varphi(1)$ by Theorem 5.5 in \cite{Michler}. Therefore, if $\beta\in\IBr(G)$ with $p\mid \beta(1)$, then $N$ is not contained in $\kernel(\beta)$ which implies that $\beta$ is faithful. Notice that such a character $\beta$ exists since $P$ is not normal in $G$. 

Now let $\beta\in\IBr(G)$ with $p\mid \beta(1)$. Then $\kernel(\beta)=1$ so $\beta(1)_{p'}\ge |G|_{p'}$ by \eqref{eqn4}, hence $\beta(1)\ge |G|_{p'}\beta(1)_p$. Now let $\chi\in\Irr(G)$ be such that $\beta$ is a constituent of $\chi^\circ,$ the restriction of $\chi$ to the $p$-regular elements of $G.$ Then $\chi(1)=\chi^\circ(1)\ge \beta(1)$ and $\chi(1)^2<|G|$ so $|G|>|G|_{p'}^2\beta(1)_p^2$ which implies that

\begin{equation}\label{eqn5}{|G|_p}/{\beta(1)_p^2}>|G|_{p'}.
\end{equation}

We now consider the following  cases.

(3) $N$ is a $p'$-group. As in the proof of Claim (2) of Theorem \ref{th:theorem A}, $P$ acts faithfully and coprimely on $N$. Now argue as in the proofs of (3) and (4) of Theorem \ref{th:theorem A} again, either $G$ has a faithful irreducible character $\varphi\in\IBr(G)$ with $\varphi(1)_p> |P|^{1/2}$ or $\varphi(1)_p=|P|$ depending on whether $N$ is abelian or not. In both cases, we have $\varphi(1)_p>|P|^{1/2}$ and thus $\varphi(1)_p^2|G|_{p'}>|G|_p|G|_{p'}=|G|\ge |G|_p$, contradicting \eqref{eqn5}.
So this case cannot happen.

\medskip

(4) $N\cong S^k$ for some integer $k\ge 1$ and some nonabelian simple group $S$ with $p$ dividing $|S|$. From (1), we know that $G$ embeds into $\Aut(N)=\Aut(S)\wr\Sym(k)$. Let $B=G\cap \Aut(S)^k$. Then there exists a transitive subgroup $H$ of $\Sym(k)$ such that $G=BH$ where $B\cap H=1$ and $N\unlhd B\unlhd G$ with $B/N\leq \Out(S)^k$.

Let $\theta\in\IBr(S)$ with $p\mid \theta(1)$. Such a character $\theta\in\IBr(S)$ exists since $p\mid |S|$ and $S$ is nonabelian simple. Now let $\lambda=\theta^k\in\IBr(N)$ and let $\varphi\in\IBr(G)$ be lying over $\lambda$. Then $\varphi(1)_p\ge \theta(1)_p^k.$

We now estimate the $p$-part of the order of $G$. We have that $|H|_p<p^{k/(p-1)}$ since $H\leq \Sym(k)$ and $|B|_p\leq |\Aut(S)|_p^k$ so $|G|_p= |B|_p\cdot |H|_p<|\Aut(S)|_p^k\cdot p^{k}$. Thus $$|G|_p/\varphi(1)_p^2<|\Aut(S)|_p^k\cdot p^k/\theta(1)_p^{2k}.$$
 
 Obviously, we have that $|G|_{p'}\geq |N|_{p'}=|S|_{p'}^k$ and thus to obtain a contradiction to \eqref{eqn5}, it suffices to show that $|\Aut(S)|_p^k\cdot p^k/\theta(1)_p^{2k}\leq |S|_{p'}^k$ or equivalently 
 
 \begin{equation}\label{eqn6}p|\Aut(S)|_p/\theta(1)_p^2\leq |S|_{p'}.\end{equation}
 
By Theorem \ref{th:pParts}, there exists $\beta\in\IBr(S)$ such that $|\Aut(S)|_p<\beta(1)_p^2$ or $p=3,S\cong \Alt(7)$ or $p=2$ and $S\in \{\textrm{M}_{22},\Alt(22),\Alt(24),\Alt(26)\}$.

$(i)$ Assume that $|\Aut(S)|_p<\beta(1)_p^2$ for some $\beta\in\IBr(S)$. Taking $\theta=\beta,$ we have $p|\Aut(S)|_p/\theta(1)_p^2<p.$ Moreover, we know from Lemma \ref{lem:Sylow order} that $|S|>|S|_p^2\ge p|S|_p$ so $|S|_{p'}>p$ and hence  \eqref{eqn6} holds in this case.

$(ii)$ Assume $p=3$ and $S=\Alt(7)$. In this case, there exists $\theta\in\IBr(S)$ with $\theta(1)_p=p$ and that $|\Aut(S)|_p=p^2.$ Therefore $p|\Aut(S)|_p/\theta(1)_p^2=p<|S|_{p'}=280.$ Thus \eqref{eqn6} holds in this case.

$(iii)$ Assume $p=2$ and $S\in \{\textrm{M}_{22},\Alt(22),\Alt(24),\Alt(26)\}$.

Assume first that $S\cong \textrm{M}_{22}$. Then $S$ has a character $\theta\in\IBr(S)$ with $\theta(1)_2=2$ and $|\Aut(S)|_2=2^8$. So  $p|\Aut(S)_p|/\theta(1)_p^2=2^7=128<|S|_{p'}=3465.$ 

Assume $S\cong\Alt(22)$. Then $|\Aut(S)|_2=2^{19}$ and $S$ has an irreducible $p$-Brauer character $\theta$ which is the restriction of $D^{(19,3)}$ to $S$ with $\theta(1)_2=2^5$. So  $$p|\Aut(S)|_p/\theta(1)_p^2=2^{10}<|\Alt(22)|_{2'}.$$ The remaining two cases can be argued similarly.
\end{proof}

%----------

\subsection*{Acknowledgment} The author is grateful to M.L Lewis, J.P. Cossey and X. Chen for  their help during the preparation of this paper. He  is also  grateful to  the referee for careful reading of the manuscript and for many helpful suggestions and corrections.

%----------------------------------

\end{document}